\newtheorem{definition}{Definition}
\newtheorem{theorem}{Theorem}
\newtheorem{proposition}{Proposition}
\newtheorem{corollary}{Corollary}
\newcommand{\Prf}{Proof}
\newenvironment{proof}[1][\Prf]{
  \trivlist
  \item[\hskip\labelsep  
    \indent \textbf{#1}]%
}{%
$\hfill\Box$
}
   \renewcommand{\theequation}{%
   \thesection.\arabic{equation}}
\newcommand{\sq}{\textquotesingle}
\newcommand{\vc}[1]{\mbox{\boldmath{$#1$}}}
\newcommand{\LB}{\left\{}
\newcommand{\RB}{\right\}}
\newcommand{\LA}{\left(}
\newcommand{\RA}{\right)}
\newcommand{\ip}[1]{\LA #1 \RA}
\newcommand{\TL}{\tilde}
\newcommand{\tvc}[1]{\TL{\mbox{\boldmath{$#1$}}}}
\newcommand{\tA}{\TL{A}}
\newcommand{\tU}{\tilde{U}}
\newcommand{\T}{{\rm T}}
\newcommand{\PO}{M}
\newcommand{\Pinv}{\PO^{-1}}
\newcommand{\Pinvt}{\PO^{-\T}}
\newcommand{\SH}{\sharp}
\newcommand{\NON}{\nonumber}
\newcommand{\UB}{\underline{ }}
\newcommand{\pR}{R}
\newcommand{\pP}{P}
\newcommand{\palpha}{\talpha}
\newcommand{\pbeta}{\tbeta}
\newcommand{\pZ}{\TL{\lambda}}
\newcommand{\PCONV}{=}
\newcommand{\rmL}{{\rm L}}
\newcommand{\rmW}{{\rm W}}
\newcommand{\rmR}{{\rm R}}
\def\L({\left(}
\def\R){\right)}
\def\i{k}
\def\j{k}
\def\P{M}
\def\KSP2{{\mathcal K}_{k+1}}
\def\KSP{${\mathcal K}_{k+1}$}
\def\NON{\nonumber}
\def\SP{\;}  % thick space
\def\UL{\underline}
\def\UB{\underline{ }}
\newcounter{alg}
\newenvironment{indention}[1]%
{\par\begingroup\addtolength{\leftskip}{#1}}%
{\par\endgroup}
\def\UL{\underline}
\def\UB{\underline{ }}
\newcommand{\ALGWIDTHA}{8em}
\newcommand{\ALGWIDTHB}{6em}
\def\FIGSIZE{0.6}
\newtheorem{remark}{\it Remark}
\newcommand{\Def}{Definition}
\newcommand{\Dfn}{\Def}
\newcommand{\Thm}{Theorem}
\newcommand{\Prop}{Proposition}
\newcommand{\Rem}{{\it Remark}}
\newcommand{\Alg}{Algorithm}
\newcommand{\Ref}{Reference}
\newcommand{\Sec}{section}
\newcommand{\App}{Appendix}
\newcommand{\Fig}{Figure}
\newcommand{\reeqno}[1]{\renewcommand{\theequation}{#1}}
\def\PBiCG{PBiCG}
\newcommand{\GRAPHAlpBet}{.}
\def\GRAPHDIR{.}
\newcommand{\GRAPHisrv}{.}
\def\GRAPHDIRTWO{.}
\title{
Structure of the polynomials in
preconditioned BiCG algorithms
and the switching direction of preconditioned systems
}
\author{
Shoji Itoh%
\thanks{Department of Engineering Science, Faculty of Engineering,
      Osaka Electro-Communication University. 
}
$\;\;$
and
$\;$
Masaaki Sugihara% \footnote{Deceased 5 January 2019}%
\thanks{Department of Physics and Mathematics,
     College of Science and Engineering, Aoyama Gakuin University.
(Deceased 5 January 2019)
}
}
\date{}
\begin{document}
\maketitle

\begin{abstract}
We present
a theorem that defines the direction of a preconditioned system
for the bi-conjugate gradient (BiCG) method.
The theorem is able to be extended
to a variety of preconditioned bi-Lanczos-type methods.
We analyze and compare the polynomial structures of
four preconditioned BiCG algorithms, in this paper.
Finally,
we show that the direction of a preconditioned system is switched
by construction and by the settings of the initial shadow residual vector.
\end{abstract}

%%%%%%%%%%%%%%%%%%%%%%%%%%%%%%%%%%%%%%%%%%%%%%%%%%%%%%%%%%%%%%%%%%%%%%%%%%%%%%%%%%%%
\section{Introduction}
\label{sec:intro}

The bi-Lanczos-type methods are based on the 
bi-conjugate gradient (BiCG) method \cite{fletcher1976,lanczos1952}
and solve the system of linear equations
\begin{eqnarray}
A\vc{x} &=& \vc{b},
\label{eqn:linear}
\end{eqnarray}
where $A$ is a large, sparse coefficient matrix
of size $n\times n$,
$\vc{x}$ is the solution vector,
and $\vc{b}$ is the right-hand side (RHS) vector.
Bi-Lanczos-type methods are a kind of Krylov subspace method,
and they
assume the existence of a dual system:
\begin{eqnarray}
A^\T\vc{x}^\SH &=& \vc{b}^\SH;
\label{eqn:shadow}
\end{eqnarray}
(\ref{eqn:shadow}) will be referred to as the ``shadow system''.
In general,
the degree $k$ of the Krylov subspace generated
by $A$ and $\vc{r}_0$ is displayed as
${\cal K}_k\L(A,\vc{r}_0\R)$
$={\rm span}\LB\vc{r}_0, A\vc{r}_0, A^2\vc{r}_0, \cdots, \right.$
 $\left. A^{k-1}\vc{r}_0\RB$,
where
$\vc{r}_0$ is the initial residual vector $\vc{r}_0 = \vc{b}-A\vc{x}_0$, for
an initial guess to the solution $\vc{x}_0$.
The Krylov subspace ${\cal K}_k\L(A,\vc{r}_0\R)$ generated by the $k$-th iteration
forms the structure of
$\vc{x}_k \in \vc{x}_0 + {\cal K}_k\L(A,\vc{r}_0\R)$,
where $\vc{x}_k$ is the approximate solution vector
(or simply the ``solution vector'').

In general,
with a preconditioned Krylov subspace method,
there are some different algorithms
depending on the preconditioning conversion.
The structure of the approximate solution
is often different for different algorithms,
and the performance of a given algorithm
may differ substantially from those of other algorithms \cite{itoh2015a, itoh2019a}.
In particular,
preconditioned bi-Lanczos-type algorithms construct dual systems,
and so their analysis is more complex.

The conjugate gradient squared (CGS) method \cite{sonneveld1989}
is one of the bi-Lanczos-type methods,
and an improved preconditioned CGS (improved PCGS) algorithm
has been proposed \cite{itoh2015a}.
In a previous study \cite{itoh2019a}, we compared
the structures of the recurrence formula of the solution vectors of four PCGS algorithms,
including the improved PCGS.
In this paper,
we analyze the structures on the polynomials of
the preconditioned BiCG (PBiCG) algorithms that
correspond to those analyzed in our previous study \cite{itoh2019a}.
Furthermore, in
~\cite{itoh2019a}, we also discussed the construction
of the initial shadow residual vector (ISRV)
in terms of the direction of the preconditioned system;
we further analyze this topic in this paper.

In this paper, when we refer to a
{\it preconditioned algorithm}, we mean one involving a
preconditioning operator $\PO$ or a preconditioning matrix,
and by {\it preconditioned system}, we mean
one that has been converted by some operator(s) based on $\PO$.
These terms never indicate
{\it the algorithm for the preconditioning operation itself},
such as incomplete LU decomposition or the approximate inverse.
For example,
for a preconditioned system,
the original linear system (\ref{eqn:linear}) becomes
\begin{eqnarray}
&&
\tA\tvc{x} = \tvc{b},
\label{eqn:plinear}
\\
&&
\tA     \PCONV \Pinv_L A \Pinv_R, \;\;
\tvc{x} \PCONV \PO_R\vc{x}, \;\;
\tvc{b} \PCONV \Pinv_L\vc{b},
\label{eqn:plinear_conv}
\end{eqnarray}
with the preconditioner $\PO = \PO_L\PO_R$ ($\PO \approx A$).
In this paper,
the matrix and the vector in the preconditioned system
are indicated by a tilde ($\;\TL{ }\;$).
However,
the conversions in (\ref{eqn:plinear}) and (\ref{eqn:plinear_conv}) are
not implemented directly;
rather, we construct the preconditioned algorithm
that is equivalent to solving (\ref{eqn:plinear}).

This paper is organized as follows.
In section~\ref{sec:pbicg_alg}, we analyze 
various PBiCG algorithms
in terms on their polynomial structures,
and we clarify the details of the PCGS algorithms discussed in ~\cite{itoh2019a}.
In section~\ref{sec:direction_preconditioning},
we present a theorem that defines the direction of a preconditioned system
for the BiCG method.
We analyze the mechanism
that switches the direction of a preconditioned system for the BiCG method,
and we provide the details for some instances that show that, 
depending on the construction and setting of the ISRV,
the BiCG method may be transformed to another method
or the direction of the preconditioned system may not be determined.
In section~\ref{sec:numerical_experiments},
we present some numerical results that verify
the equivalence of the PBiCG and PCGS methods,
the properties of each of the four PBiCG algorithms discussed
in \Sec~\ref{sec:pbicg_alg},
the switching of the direction of a preconditioned system for the BiCG method,
and the resulting basic properties,
as discussed in \Sec~\ref{sec:direction_preconditioning}.
Our conclusions are presented in section~\ref{sec:conclusion}.

%%%%%%%%%%%%%%%%%%%%%%%%%%%%%%%%%%%%%%%%%%%%%%%%%%%%%%%%%%%%%%%%%%%%%%%%%%%%%%%%%%%%
\section{Analysis of various preconditioned BiCG algorithms}
\label{sec:pbicg_alg}

In this section, we consider
four different PBiCG algorithms,
these PBiCG algorithms correspond to four PCGS algorithms
as shown
in \Fig~\ref{fig:PCGS_ConvLlprecImproved};
these are the same ones discussed in \cite{itoh2019a}.

\Alg~\ref{alg:pbicg_simple} can be used to derive these four PBiCG algorithms.
\\

%%%%%%%%%%%%%%%%%%%%%%%%%%%%%%%%%%%%%%%%%%%%%%%%%%%%%%%%%%%%%
\begin{figure}[htbp]
\begin{center}
\resizebox*{\FIGSIZE\columnwidth}{!}{
\includegraphics*{./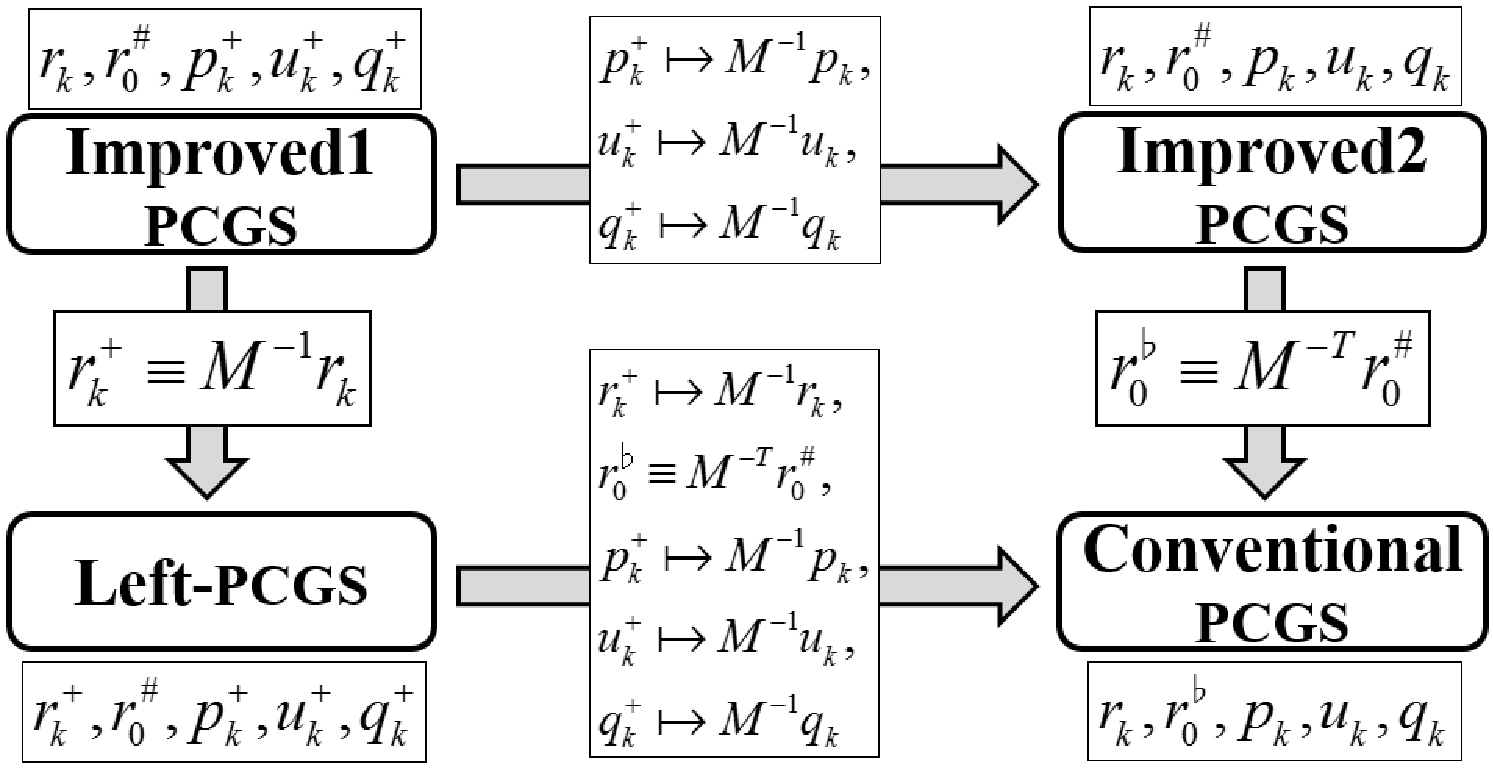}}
\caption{Relations between the four different PCGS
algorithms\cite{itoh2019a}.
$\mapsto$ : Splitting left vector to right members (preconditioner and vector),
$\equiv$ : Substituting left vector for right members.}
\label{fig:PCGS_ConvLlprecImproved}
\end{center}
\end{figure}
%%%%%%%%%%%%%%%%%%%%%%%%%%%%%%%%%%%%%%%%%%%%%%%%%%%%%%%%%%%%%

%%%%%%%%%%%%%%%%%%%%%%%%%%%%%%%%%%%%%%%%%%%%%%%%%%%%%%%%%%%%%
%\stepcounter{alg}
\refstepcounter{alg}
\label{alg:pbicg_simple}
\noindent
{\bf Algorithm \thealg.
BiCG method for preconditioned system:}
\begin{indention}{\ALGWIDTHB}
%%%%%%%%%%%%%%%%%%%%%%%%%%%%%%%%%%%%%%%%%%%%%%%%%%%%%%%%%%%%%
\noindent
$\tvc{x}_0$ is an initial guess,
$\SP\tvc{r}_0= \tvc{b}-\tA\tvc{x}_0$,
set $\SP\beta^{\rm PBiCG}_{-1}=0$,
\\
$\LA \tvc{r}^\SH_0, \tvc{r}_0 \RA \neq 0$,
e.g.,
$\tvc{r}^\SH_0 = \tvc{r}_0, \SP$
 \\
For $k = 0, 1, 2, \cdots ,$ until convergence, Do:
\begin{eqnarray}
&& \tvc{p}_k = \tvc{r}_k + \beta^{\rm \PBiCG}_{k-1}\tvc{p}_{k-1}, \NON \\ %\SP\SP
&& \tvc{p}^\SH_k = \tvc{r}^\SH_k + \beta^{\rm \PBiCG}_{k-1}\tvc{p}^\SH_{k-1}, \NON \\
&& \alpha^{\rm \PBiCG}_k = \frac
            {\LA \tvc{r}^\SH_k, \tvc{r}_k \RA}
            {\LA \tvc{p}^\SH_k, \tA\tvc{p}_k \RA}, \NON
\\
&& \tvc{x}_{k+1} = \tvc{x}_k + \alpha^{\rm \PBiCG}_k \tvc{p}_k, \NON \\
&& \tvc{r}_{k+1} = \tvc{r}_k - \alpha^{\rm \PBiCG}_k \tA\tvc{p}_k, \NON \\ %\SP\SP
&& \tvc{r}^\SH_{k+1} = \tvc{r}^\SH_k - \alpha^{\rm \PBiCG}_k \tA^\T\tvc{p}^\SH_k, \NON \\
&& \beta^{\rm \PBiCG}_k = \frac
            {\LA \tvc{r}^\SH_{k+1}, \tvc{r}_{k+1} \RA}
            {\LA \tvc{r}^\SH_k,     \tvc{r}_k     \RA}, \NON
\end{eqnarray}
End Do
\\

\end{indention}
%%%%%%%%%%%%%%%%%%%%%%%%%%%%%%%%%%%%%%%%%%%%%%%%%%%%%%%%%%%%%

Any preconditioned algorithm can be derived by substituting
the matrix with the preconditioner for the matrix with the tilde
and
the vectors with the preconditioner for the vectors with the tilde.
Obviously,
\Alg~\ref{alg:pbicg_simple} without the preconditioning conversion
is the same as the BiCG method.
If $\tA$ is a symmetric positive definite (SPD) matrix and
$\tvc{r}^\SH_0=\tvc{r}_0$,
then \Alg~\ref{alg:pbicg_simple} is mathematically equivalent to
the conjugate gradient (CG) method \cite{hestenes1952}
for a preconditioned system.

We present the following general definition; however,
the PBiCG will also require
\Thm~\ref{thm:dir_prec_by_isrv}, which will be presented in section 3.

\begin{definition}
\label{dfn:dir_prec}
For the system and solution

\begin{equation}
\reeqno{\ref{eqn:plinear}\sq}
\hspace*{-148pt}
\tA\tvc{x} = \tvc{b},
\end{equation}
\begin{equation}
\reeqno{\ref{eqn:plinear_conv}\sq}
\tA     \PCONV \Pinv_L A \Pinv_R, \;\;
\tvc{x} \PCONV \PO_R\vc{x}, \;\;
\tvc{b} \PCONV \Pinv_L\vc{b},
\end{equation}
we define
the direction of a preconditioned system of linear equations as follows:
\begin{itemize}
\item {\it The two-sided preconditioned system:}
Equation (\ref{eqn:plinear_conv}\sq);
\item {\it The right-preconditioned system:}
$\PO_L = I$ and $\PO_R = \PO$ in  (\ref{eqn:plinear_conv}\sq);
\item {\it The left-preconditioned system:}
$\PO_L = \PO$ and $\PO_R = I$ in  (\ref{eqn:plinear_conv}\sq),
\end{itemize}
where
$\PO$ is the preconditioner $\PO=\PO_L\PO_R$ ($\PO \approx A$), and
$I$ is the identity matrix.

Other vectors in the solving method are not preconditioned.
The initial guess is given as $\vc{x}_0$,
and $\tvc{x}_0 = \PO_R \vc{x}_0$.
\end{definition}

\setcounter{equation}{0}

The recurrence relations of the BiCG for a preconditioned system are
\begin{eqnarray}
\pR_0(\pZ) &=& 1, \SP\SP \pP_0(\pZ)=1, %\NON
\label{eqn:prec_init_pol}
\\
\pR_k(\pZ) &=&
  \pR_{k-1}(\pZ) -\alpha^{\rm \PBiCG}_{k-1}\pZ \pP_{k-1}(\pZ), %\SP\SP \pR_0(\pZ)=1, %\NON
\label{eqn:prec_res_pol}
\\
\pP_k(\pZ) &=&
  \pR_k(\pZ) +\beta^{\rm \PBiCG}_{k-1} \pP_{k-1}(\pZ). %,\SP\SP \pP_0(\pZ)=1 %\NON
\label{eqn:prec_prob_pol}
\end{eqnarray}
$\pR_k(\pZ)$ is the degree $k$ of the residual polynomial,
and
$\pP_k(\pZ)$ is the degree $k$ of the probing direction polynomial,
that is,
\begin{eqnarray}
\tvc{r}_k &\PCONV&
\pR_k(\tA)\tvc{r}_0,
\label{eqn:bicg_conv_tilde_rL}
\\
\tvc{p}_k &\PCONV&
\pP_k(\tA)\tvc{r}_0 .
\label{eqn:bicg_conv_tilde_pL}
\end{eqnarray}

Further,
to the shadow for the preconditioned system $\tA^\T\tvc{x}^\SH = \tvc{b}^\SH$,
we have
\begin{eqnarray}
\tvc{r}^\SH_k &\PCONV&
\pR_k(\tA^\T)\tvc{r}^\SH_0 ,
\label{eqn:bicg_conv_tilde_rS}
\\
\tvc{p}^\SH_k &\PCONV&
\pP_k(\tA^\T)\tvc{r}^\SH_0 .
\label{eqn:bicg_conv_tilde_pS}
\end{eqnarray}

%%%%%%%%%%%%%%%%%%%%%%%%%%%%%%%%%%%%%%%%%%%%%%%%%%%%%%%%%%%%%
\begin{figure}[t]
\begin{center}
\resizebox*{\FIGSIZE\columnwidth}{!}{
\includegraphics*{./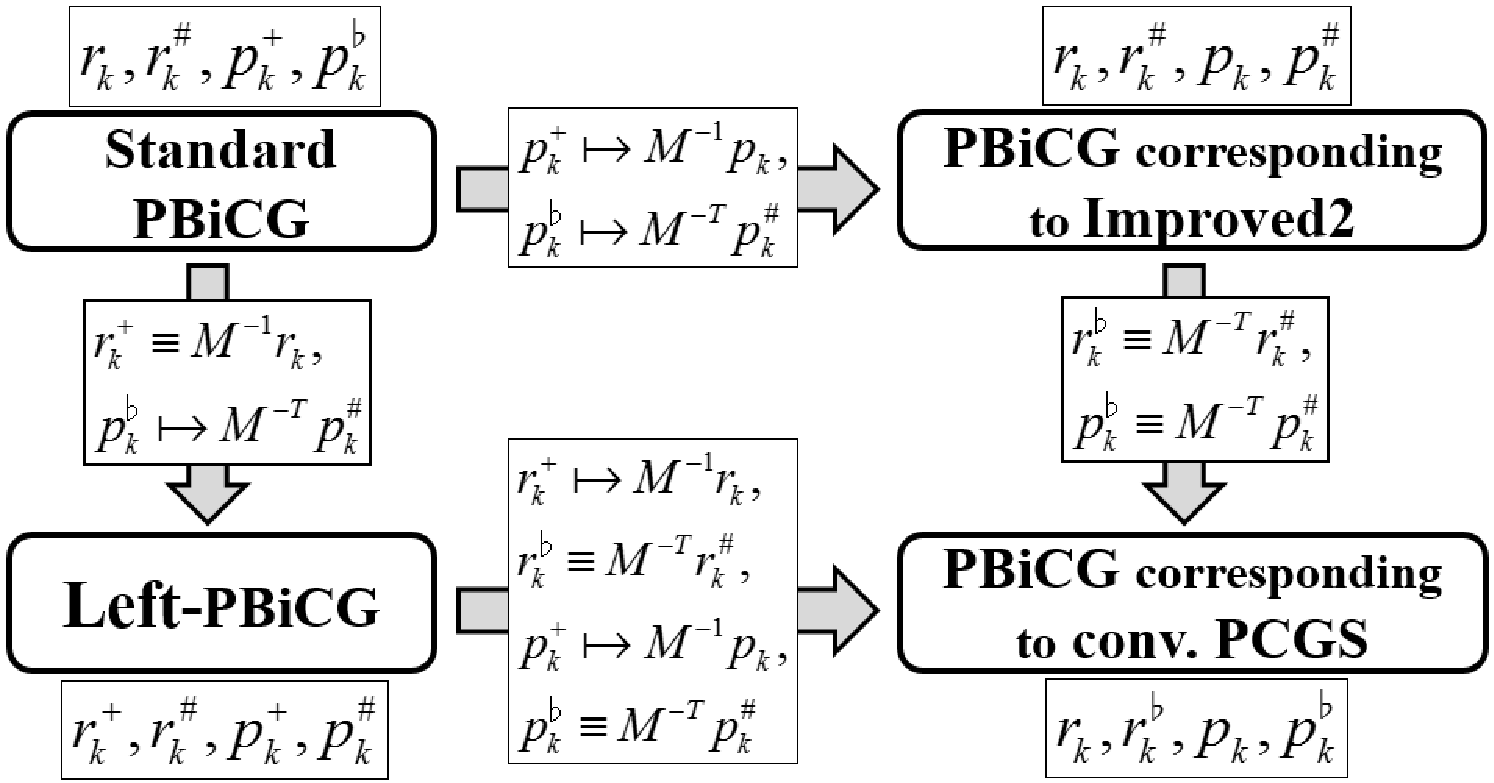}}
\caption{Relations between the four PBiCG algorithms that
correspond to the respective PCGS algorithms shown 
in \Fig~\ref{fig:PCGS_ConvLlprecImproved}.
$\mapsto$ : Splitting left vector to right members (preconditioner and vector),
$\equiv$ : Substituting left vector for right members.
}
\label{fig:PBiCG_ConvLlprecImproved}
\end{center}
\end{figure}
%%%%%%%%%%%%%%%%%%%%%%%%%%%%%%%%%%%%%%%%%%%%%%%%%%%%%%%%%%%%%
%
%
%
\begin{theorem}[Lanczos \cite{lanczos1952}, Fletcher \cite{fletcher1976},
Itoh and Sugihara \cite{itoh2015a}]
\label{thm:pbicg_biortho_biconj}
The BiCG method for a preconditioned system satisfies the following conditions:
\begin{eqnarray}
\ip{ \tvc{r}^\SH_i, \tvc{r}_j }
&=& 0
   \;\;\; (i\neq j), \;\;\; \mbox{ (biorthogonality),} % \NON
\label{eqn:pbiorthogonal}
% \\
\\
\ip{ \tvc{p}^\SH_i, \tA \tvc{p}_j }
&=& 0
   \;\;\; (i\neq j), \;\;\; \mbox{ (biconjugacy).} % \NON
\label{eqn:pbiconjugate}
% \\
\end{eqnarray}
\end{theorem}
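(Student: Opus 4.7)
My plan is to prove both relations simultaneously by induction on the iteration index, exploiting the polynomial representations (\ref{eqn:bicg_conv_tilde_rL})--(\ref{eqn:bicg_conv_tilde_pS}) that have already been introduced. Using these representations together with the adjoint, I would first observe that
\[
\ip{\tvc{r}^\SH_i,\tvc{r}_j}
= \ip{R_i(\tA^\T)\tvc{r}^\SH_0,\,R_j(\tA)\tvc{r}_0}
= \ip{\tvc{r}^\SH_0,\,R_i(\tA)R_j(\tA)\tvc{r}_0},
\]
and similarly
\[
\ip{\tvc{p}^\SH_i,\tA\tvc{p}_j}
= \ip{\tvc{r}^\SH_0,\,\tA\,P_i(\tA)P_j(\tA)\tvc{r}_0}.
\]
Because these bilinear forms are symmetric in $i$ and $j$, it suffices to establish the claims for $i<j$, and the fact that the same recurrences (\ref{eqn:prec_init_pol})--(\ref{eqn:prec_prob_pol}) govern both the shadow and the non-shadow polynomials will let a single induction handle both sides.

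The plan is joint induction on $k$ on the two statements
\[
\mbox{(a)}\ \ip{\tvc{r}^\SH_i,\tvc{r}_j}=0\ (i\neq j,\;i,j\le k),\qquad
\mbox{(b)}\ \ip{\tvc{p}^\SH_i,\tA\tvc{p}_j}=0\ (i\neq j,\;i,j\le k).
\]
For the base case $k=0$ there is nothing to check, and for $k=1$ the identities $\ip{\tvc{r}^\SH_0,\tvc{r}_1}=0$ and $\ip{\tvc{p}^\SH_0,\tA\tvc{p}_1}=0$ follow directly from the defining formulas for $\alpha^{\rm PBiCG}_0$ and $\beta^{\rm PBiCG}_0$ in Algorithm~\ref{alg:pbicg_simple} after substituting the recurrences for $\tvc{r}_1$ and $\tvc{p}_1$. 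For the inductive step, assuming (a) and (b) up to index $k$, I would compute
\[
\ip{\tvc{r}^\SH_i,\tvc{r}_{k+1}}
= \ip{\tvc{r}^\SH_i,\tvc{r}_k} - \alpha^{\rm PBiCG}_k\ip{\tvc{r}^\SH_i,\tA\tvc{p}_k}
\]
for every $i\le k$ and treat the two cases $i<k$ and $i=k$ separately. When $i<k$ the first term vanishes by the induction hypothesis (a); for the second term I would rewrite $\tvc{r}^\SH_i=\tvc{p}^\SH_i-\beta^{\rm PBiCG}_{i-1}\tvc{p}^\SH_{i-1}$ so that (b) kills it. When $i=k$, the two terms cancel exactly because of the choice
$\alpha^{\rm PBiCG}_k=\ip{\tvc{r}^\SH_k,\tvc{r}_k}/\ip{\tvc{p}^\SH_k,\tA\tvc{p}_k}$,
after first replacing $\tvc{r}^\SH_k$ inside $\ip{\tvc{r}^\SH_k,\tA\tvc{p}_k}$ by $\tvc{p}^\SH_k-\beta^{\rm PBiCG}_{k-1}\tvc{p}^\SH_{k-1}$ and applying (b) to the trailing piece. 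The symmetric argument using the shadow recurrence handles $i=k+1$ against $j\le k$.

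For the biconjugacy at step $k+1$ I would use $\tvc{p}_{k+1}=\tvc{r}_{k+1}+\beta^{\rm PBiCG}_k\tvc{p}_k$ and compute
\[
\ip{\tvc{p}^\SH_i,\tA\tvc{p}_{k+1}}
= \ip{\tvc{p}^\SH_i,\tA\tvc{r}_{k+1}} + \beta^{\rm PBiCG}_k\ip{\tvc{p}^\SH_i,\tA\tvc{p}_k}.
\]
For $i<k$ both terms vanish by the already-extended (a) (after rewriting $\tA\tvc{p}^\SH_i$ via the residual recurrence as $(\tvc{r}^\SH_i-\tvc{r}^\SH_{i+1})/\alpha^{\rm PBiCG}_i$) and by (b). For $i=k$ the definition
$\beta^{\rm PBiCG}_k=\ip{\tvc{r}^\SH_{k+1},\tvc{r}_{k+1}}/\ip{\tvc{r}^\SH_k,\tvc{r}_k}$
is precisely what is needed for cancellation, once the residual recurrence is used to turn $\ip{\tvc{p}^\SH_k,\tA\tvc{r}_{k+1}}$ into a ratio of inner products of residuals. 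The main obstacle I anticipate is not any single calculation but the bookkeeping of the four index-parity cases ($i<j$ vs. $i=j-1$, shadow vs. non-shadow) while keeping the inductive hypotheses exactly strong enough; organizing the argument through the polynomial form above is what keeps this manageable.
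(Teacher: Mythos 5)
Your proposal is correct, but it should be said up front that the paper itself offers no proof of this theorem: it is stated with attributions to Lanczos, Fletcher, and Itoh--Sugihara, and the reader is implicitly referred to those sources. What you have written is essentially the classical inductive argument that those references contain, transplanted verbatim to the tilde variables of Algorithm~1 --- which is the right move, since the preconditioned BiCG is formally identical to BiCG with $\tA$, $\tvc{r}_0$, $\tvc{r}^\SH_0$ in place of the unpreconditioned quantities, so nothing about the preconditioning needs to be unravelled. Your reduction to the case $i<j$ via $\ip{\tvc{r}^\SH_i,\tvc{r}_j}=\ip{\tvc{r}^\SH_0,\pR_i(\tA)\pR_j(\tA)\tvc{r}_0}$ is a genuine economy over the usual presentation (it halves the case analysis), and it is legitimate precisely because the same scalars $\alpha^{\rm PBiCG}_k,\beta^{\rm PBiCG}_k$ drive both the shadow and non-shadow recurrences, so the two polynomial families coincide and commute. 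The cancellation steps you describe for $i=k$ (using the definition of $\alpha^{\rm PBiCG}_k$ after stripping the $\beta_{k-1}\tvc{p}^\SH_{k-1}$ term by biconjugacy) and for the biconjugacy update (using the definition of $\beta^{\rm PBiCG}_k$ after converting $\tA^\T\tvc{p}^\SH_k$ to $(\tvc{r}^\SH_k-\tvc{r}^\SH_{k+1})/\alpha^{\rm PBiCG}_k$) are exactly right. The only thing worth making explicit is the standing non-breakdown hypothesis: you divide by $\alpha^{\rm PBiCG}_i$ and by $\ip{\tvc{r}^\SH_k,\tvc{r}_k}$, so the argument assumes these quantities are nonzero up to the current step, as is conventional for bi-Lanczos-type methods.
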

\begin{proposition}
\label{prop:pbicg_biortho_biconj_alpha_beta}
The direction of a preconditioned system 
is determined by the operations of
$\alpha_k$ and $\beta_k$ in each PBiCG algorithm.
These intrinsic operations are based on
biorthogonality and biconjugacy.
\end{proposition}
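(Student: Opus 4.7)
The plan is to derive \Prop~\ref{prop:pbicg_biortho_biconj_alpha_beta} by unpacking the two scalars $\alpha^{\rm \PBiCG}_k$ and $\beta^{\rm \PBiCG}_k$ defined in \Alg~\ref{alg:pbicg_simple}. Each is a ratio of the bilinear forms $\LA \tvc{r}^\SH_k, \tvc{r}_k \RA$ and $\LA \tvc{p}^\SH_k, \tA\tvc{p}_k \RA$, and these forms live on the preconditioned system. Using the conversion (\ref{eqn:plinear_conv}) with $\tA = \Pinv_L A \Pinv_R$, I would substitute in turn each of the three direction-specific choices of $(\PO_L,\PO_R)$ from \Dfn~\ref{dfn:dir_prec} and track how the preconditioner factors are absorbed into the vectors that a concrete implementation actually stores (the un-tilded residual and probing-direction vectors). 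This substitution converts the two bilinear forms into three distinct expressions, one per direction, and these are precisely the formulas one sees in the four concrete PBiCG algorithms drawn from \Fig~\ref{fig:PCGS_ConvLlprecImproved}.

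Next, I would explain why these specific ratios are forced: \Thm~\ref{thm:pbicg_biortho_biconj} asserts biorthogonality $\LA \tvc{r}^\SH_i,\tvc{r}_j\RA = 0$ and biconjugacy $\LA \tvc{p}^\SH_i,\tA\tvc{p}_j\RA = 0$ for $i\neq j$. Imposing $\LA \tvc{r}^\SH_k,\tvc{r}_{k+1}\RA = 0$ on the residual recurrence (\ref{eqn:prec_res_pol}) and solving for the step length yields exactly the $\alpha^{\rm \PBiCG}_k$ of \Alg~\ref{alg:pbicg_simple}; similarly $\tA$-biconjugacy of $\tvc{p}^\SH_k$ with the next probing direction built from (\ref{eqn:prec_prob_pol}) delivers $\beta^{\rm \PBiCG}_k$. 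Hence the two intrinsic operations defining these scalars are direct consequences of biorthogonality and biconjugacy, establishing the second sentence of the proposition. Combined with the first step, which shows that each direction produces its own formula for these scalars in terms of the stored vectors, this yields the one-to-one correspondence between the direction of the preconditioned system and the form of the $\alpha_k,\beta_k$ operations asserted by the first sentence.

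The main obstacle I anticipate is disentangling the bookkeeping freedom in each derived algorithm: a single direction can be implemented with several different storage conventions (e.g.\ keeping $\PO_L\tvc{r}_k$, $\tvc{r}_k$, or $\Pinv_R\tvc{r}_k$ as the ``residual''), and I must show that modulo this cosmetic choice the $\alpha_k,\beta_k$ formulas still uniquely reveal the direction. Carrying this out case-by-case on the four PBiCG algorithms and cross-checking against the ISRV prescription from the forthcoming \Thm~\ref{thm:dir_prec_by_isrv} (\Sec~\ref{sec:direction_preconditioning}) is where the bulk of the verification will sit; the algebraic manipulations themselves are straightforward once the substitutions from (\ref{eqn:plinear_conv}) are made explicit.
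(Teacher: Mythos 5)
Your proposal follows essentially the same route as the paper, which states this proposition without a formal proof and substantiates it through the case-by-case analysis of Sections~2.1--2.5: there, for each of the four algorithms, the inner products defining $\alpha_k$ and $\beta_k$ are rewritten under the direction-specific conversion and expressed through the residual and probing-direction polynomials, so that the coefficient matrix ($A\Pinv$, $\Pinv A$, or $\Pinv_L A\Pinv_R$) appearing in those bilinear forms exhibits the direction; your second step (recovering $\alpha_k,\beta_k$ by imposing $\LA\tvc{r}^\SH_k,\tvc{r}_{k+1}\RA=0$ and the analogous biconjugacy condition) is the standard derivation underlying Theorem~1 and supports the proposition's second sentence exactly as the paper intends.

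One caution: your stated goal of a \emph{one-to-one correspondence} between the direction and the form of the $\alpha_k,\beta_k$ operations overclaims relative to what the paper establishes. The paper itself shows that the intrinsic biorthogonal and biconjugate structures are the same for the left, standard, and Improved2 algorithms, and that the inner product $\LA\vc{r}^\flat_k,\vc{r}_k\RA$ of the right-system algorithm is formally the split of $\LA\Pinvt\vc{r}^\SH_k,\vc{r}_k\RA$; what finally separates the right system from the left-coordinated ones is the construction of the ISRV (Theorem~3), not the shape of the scalars alone. You correctly flag this as the main obstacle and point to the ISRV cross-check as the resolution, which is precisely how the paper closes the gap (Proposition~3 on congruency, Proposition~4, and Remark~6), so the plan is sound provided that final step is carried out rather than the correspondence being asserted from the $\alpha_k,\beta_k$ formulas by themselves.
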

\begin{theorem}
\label{thm:PBiCGforPCGS}
There exists a PBiCG algorithm that
corresponds to the preconditioning conversion defined by any given PCGS,
and the values of
$\alpha_k$ and $\beta_k$ will be equivalent to those of the PCGS.
\end{theorem}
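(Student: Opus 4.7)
The plan is to prove the theorem constructively: for each of the four PCGS variants, I would build the PBiCG algorithm that uses the same preconditioning conversion by substituting the appropriate splitting into Algorithm~\ref{alg:pbicg_simple}, and I would then verify that the scalars $\alpha_k$ and $\beta_k$ coincide with those of the PCGS by appealing to the polynomial representation in (\ref{eqn:bicg_conv_tilde_rL})--(\ref{eqn:bicg_conv_tilde_pS}) together with the classical adjoint identity that relates BiCG and CGS inner products.

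First I would read off, from each PCGS variant of Figure~\ref{fig:PCGS_ConvLlprecImproved}, the concrete splitting $\PO = \PO_L\PO_R$ and the substitution rule describing how each tilded vector of Algorithm~\ref{alg:pbicg_simple} is expressed in terms of $\PO_L$, $\PO_R$, and a non-tilded vector. For the two-sided, left, and right cases this rule is symmetric in the residual/probing-direction pair; for the improved variant one member of the pair is retained in tilded form. After substituting these rules into the recurrences of Algorithm~\ref{alg:pbicg_simple} and absorbing the left factors produced by $\tA = \Pinv_L A \Pinv_R$, the result is a concrete PBiCG algorithm whose preconditioning conversion matches that of the target PCGS by construction; these are precisely the four algorithms displayed in Figure~\ref{fig:PBiCG_ConvLlprecImproved}. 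This establishes existence.

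Next I would verify equivalence of the scalars. Using (\ref{eqn:bicg_conv_tilde_rL})--(\ref{eqn:bicg_conv_tilde_pS}),
\[
\alpha^{\rm \PBiCG}_k
 = \frac{\LA \pR_k(\tA^\T)\tvc{r}^\SH_0,\; \pR_k(\tA)\tvc{r}_0 \RA}
        {\LA \pP_k(\tA^\T)\tvc{r}^\SH_0,\; \tA\,\pP_k(\tA)\tvc{r}_0 \RA},
\]
and moving the polynomials in $\tA^\T$ onto the right-hand side of each inner product yields
\[
\alpha^{\rm \PBiCG}_k
 = \frac{\LA \tvc{r}^\SH_0,\; \pR_k^2(\tA)\tvc{r}_0 \RA}
        {\LA \tvc{r}^\SH_0,\; \tA\,\pP_k^2(\tA)\tvc{r}_0 \RA}.
\]
These right-hand sides are exactly the inner products that the corresponding PCGS computes, since its residual and probing polynomials are $\pR_k^2(\tA)\tvc{r}_0$ and $\pP_k^2(\tA)\tvc{r}_0$ under the same preconditioning conversion and the same ISRV $\tvc{r}^\SH_0$. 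The identical manipulation for $\beta_k$ completes the equivalence.

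The main obstacle is the bookkeeping in the improved PCGS case, in which an asymmetric substitution rule leaves one side of the residual or probing-direction pair in tilded form. There I would need to verify that the biconjugacy inner product $\ip{\tvc{p}^\SH_k, \tA\tvc{p}_k}$ still collapses, after the adjoint move, to the same CGS denominator computed by the improved PCGS, and that the ISRV assumed by the derived PBiCG is consistent with that of the improved PCGS (cf.\ the forthcoming Theorem~\ref{thm:dir_prec_by_isrv}). Once that case is settled, the remaining three variants reduce to routine term-by-term checks that substituting the splitting into each line of Algorithm~\ref{alg:pbicg_simple} yields the prescribed non-tilded update.
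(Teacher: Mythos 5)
Your constructive route---substituting each splitting into Algorithm~\ref{alg:pbicg_simple} to obtain the four algorithms of Figure~\ref{fig:PBiCG_ConvLlprecImproved}, then moving the $\tA^{\T}$-polynomials across the inner products to get $\ip{\tvc{r}^\SH_0, \pR_k^2(\tA)\tvc{r}_0}$ and $\ip{\tvc{r}^\SH_0, \tA\pP_k^2(\tA)\tvc{r}_0}$, which are the PCGS scalars---is the standard Sonneveld-type argument and is sound, including your correct flagging that the ISRV of the derived PBiCG must match that of the target PCGS. The paper itself gives no inline proof (it simply cites \cite{itoh2015a}, which establishes exactly this relation between the PBiCG and PCGS scalars), so your proposal is consistent with the approach the paper delegates to that reference.
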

\begin{proof}
See \cite{itoh2015a}.
\end{proof}

In particular,
\Ref~\cite{itoh2015a} explains the relations between
 $\alpha^{\rm PBiCG}_k$ and $\beta^{\rm PBiCG}_k$ of the standard PBiCG
 and
 $\alpha^{\rm PCGS}_k$ and $\beta^{\rm PCGS}_k$ of the improved PCGS.
In this paper,
we consider four PBiCG algorithms shown in \Fig~\ref{fig:PBiCG_ConvLlprecImproved},
and these correspond to the
four PCGS algorithms shown
in \Fig~\ref{fig:PCGS_ConvLlprecImproved}.

\subsection{PBiCG corresponding to conventional PCGS of the right system}
\label{ssec:pbicg_correspond_conv}

The PBiCG algorithm corresponding to the conventional PCGS
(the right-preconditioned system)
is derived by applying the following preconditioning conversion%
\footnote{%
In this case,
the shadow vectors of $\tvc{r}^\SH_k$ and $\tvc{p}^\SH_k$
are converted to $\PO^\T_L\vc{r}^\flat_k$ and $\PO^\T_L\vc{p}^\flat_k$,
but there is no problem with displaying
$\PO^\T_L\vc{r}^\SH_k$ and $\PO^\T_L\vc{p}^\SH_k$
in the notation of the algorithm.
However,
these internal structures are
 $\vc{r}^\flat_k \equiv \Pinvt\vc{r}^\SH_k$ and
 $\vc{p}^\flat_k \equiv \Pinvt\vc{p}^\SH_k$.
The details of this notation will be discussed
in sections~\ref{ssec:analysis_of_4pcgs} and \ref{sec:direction_preconditioning}.
The same applies to (\ref{eqn:pbicg_right_system_pcgs}).
}
to \Alg~\ref{alg:pbicg_simple}:
\begin{eqnarray}
&&
\tA       \PCONV \Pinv_L A \Pinv_R,  \SP
\tvc{x}_k \PCONV \P_R\vc{x}_k, \SP
\tvc{b}   \PCONV \Pinv_L\vc{b}, \SP %\NON
\label{eqn:pbicg_corresp_conv_pcgs}
\\
&&
\tvc{r}_k \PCONV \Pinv_L\vc{r}_k, \SP
\tvc{p}_k \PCONV \Pinv_L\vc{p}_k, \SP % \NON
% \\
\tvc{r}^\SH_k \PCONV \P^\T_L\vc{r}^\flat_k, \SP
\tvc{p}^\SH_k \PCONV \P^\T_L\vc{p}^\flat_k. \NON
\end{eqnarray}

\Alg~\ref{alg:pbicg_corresp_conv_pcgs} is presented below. %; its
\\

%
%%%%%%%%%%%%%%%%%%%%%%%%%%%%%%%%%%%%%%%%%%%%%%%%%%%%%%%%%%%%%
\refstepcounter{alg}
\label{alg:pbicg_corresp_conv_pcgs}
\noindent
{\bf Algorithm \thealg.
PBiCG algorithm corresponding to the conventional PCGS:}
\begin{indention}{\ALGWIDTHA}
%%%%%%%%%%%%%%%%%%%%%%%%%%%%%%%%%%%%%%%%%%%%%%%%%%%%%%%%%%%%%
\noindent
$\vc{x}_0$ is an initial guess,
$\SP\vc{r}_0= \vc{b}-A\vc{x}_0, \SP\SP$
set $\SP\beta_{-1}=0$,
\\
$\LA\TL{\vc{r}}^\SH_0, \TL{\vc{r}}_0\RA
 \PCONV \LA\vc{r}^\flat_0, \vc{r}_0\RA
 \neq 0$,
e.g.,
$\vc{r}^\flat_0 = \vc{r}_0, \SP$
\\
For $k = 0, 1, 2, \cdots ,$ until convergence, Do:
\begin{eqnarray}
&& \vc{p}_k = \vc{r}_k + \beta_{k-1}\vc{p}_{k-1},\SP\SP \NON \\
&& \vc{p}^\flat_k = \vc{r}^\flat_k + \beta_{k-1}\vc{p}^\flat_{k-1}, \NON
\\
&& \alpha_k = \frac
            {\LA\vc{r}^\flat_k, \vc{r}_k\RA}
            {\LA\vc{p}^\flat_k,  A\Pinv \vc{p}_k\RA}, %\NON
\label{eqn:alpha_rightPBiCG}
\\
&& \vc{x}_{k+1} = \vc{x}_k + \alpha_k\Pinv\vc{p}_k, \NON \\
&& \vc{r}_{k+1} = \vc{r}_k - \alpha_kA\Pinv\vc{p}_k, \SP\SP \NON \\
&& \vc{r}^\flat_{k+1} = \vc{r}^\flat_k - \alpha_k\Pinvt A^\T\vc{p}^\flat_k, \NON
\\
&& \beta_k = \frac
            {\LA\vc{r}^\flat_{k+1}, \vc{r}_{k+1}\RA}
            {\LA\vc{r}^\flat_k, \vc{r}_k\RA}, \NON
\end{eqnarray}
End Do
\\

\end{indention}
%%%%%%%%%%%%%%%%%%%%%%%%%%%%%%%%%%%%%%%%%%%%%%%%%%%%%%%%%%%%%

The stopping criterion is
\begin{eqnarray}
&&\frac{\|\vc{r}_{k+1}\|}{\|\vc{b}\|} \leq \varepsilon .
\label{eqn:conv_judge}
\end{eqnarray}

This algorithm can also be derived by the following conversion:
\begin{eqnarray}
&&
\tA \PCONV A \Pinv,  \SP
\tvc{x}_k  \PCONV \P\vc{x}_k, \SP
\tvc{b}    \PCONV \vc{b}, \SP % \NON
\label{eqn:pbicg_right_system_pcgs}
\\
&&
\tvc{r}_k \PCONV \vc{r}_k, \SP
\tvc{p}_k \PCONV \vc{p}_k, \SP % \NON
% \\
\tvc{r}^\SH_k \PCONV \vc{r}^\flat_k, \SP
\tvc{p}^\SH_k \PCONV \vc{p}^\flat_k.  \NON
\end{eqnarray}

This is the same as using $\PO_L=I$ and $\PO_R=\PO $
in (\ref{eqn:pbicg_corresp_conv_pcgs}).
Note that
this is the same as preconditioning to obtain
$\tA$, $\tvc{x}_k$, and $\tvc{b}$, 
but not converting the other vectors; thus, it is the
right-preconditioned system.

Now,
we convert $\tA$ and $\tvc{r}_0$ using (\ref{eqn:pbicg_corresp_conv_pcgs})
in order to obtain the polynomial representations of
(\ref{eqn:bicg_conv_tilde_rL}) and (\ref{eqn:bicg_conv_tilde_pL})
as $\tvc{r}_k$ and $\tvc{p}_k$, respectively:
\begin{eqnarray}
\tvc{r}_k % \PCONV \Pinv_L\vc{r}_k
              &=& \pR^\rmR_k(\tA)\tvc{r}_0
% \\
               =  \Pinv_L\pR^\rmR_k(A\Pinv)\vc{r}_0, \NON
\label{eqn:plinear_pol_r_conv}
\\
\tvc{p}_k % \PCONV \Pinv_L\vc{p}_k
              &=& \pP^\rmR_k(\tA)\tvc{r}_0
               =  \Pinv_L \pP^\rmR_k(A\Pinv)\vc{r}_0. \NON
\label{eqn:plinear_pol_p_conv}
\end{eqnarray}

We have denoted
these polynomials with a superscript ``\rmR'' %
\footnote{
In a similar manner,
we will use
``\rmL'' to indicate left-preconditioned system
  and
``\rmW'' to indicated two-sided preconditioned system
(see section~\ref{sec:direction_preconditioning}).
},
to indicate that
\Alg~\ref{alg:pbicg_corresp_conv_pcgs}, which
corresponds to the conventional PCGS method,
is a right-preconditioned system \cite{itoh2019a}.
The ISRV is set as $\vc{r}^\flat_0=\vc{r}_0$ in this algorithm.

Furthermore,
we use (\ref{eqn:pbicg_corresp_conv_pcgs}) to convert $\tvc{r}_k$ and $\tvc{p}_k$:
\begin{eqnarray}
\vc{r}_k &=& \pR^\rmR_k(A\Pinv)\vc{r}_0, %\NON
\label{eqn:plinear_pol_r}
\\
\vc{p}_k &=& \pP^\rmR_k(A\Pinv)\vc{r}_0 . %\NON
\label{eqn:plinear_pol_p}
\end{eqnarray}

The shadow system is also treated in a similar manner
using (\ref{eqn:pbicg_corresp_conv_pcgs}):
\begin{eqnarray}
\tvc{r}^\SH_k % \PCONV \P^\T_L\vc{r}^\flat_k
  &=&    \pR^\rmR_k(\tA^\T)\tvc{r}^\SH_0
% \\
   =    \P^\T_L\pR^\rmR_k(\Pinvt A^\T)\vc{r}^\flat_0 , \NON
\label{eqn:pshadow_pol_r_conv}
\\
\tvc{p}^\SH_k %\PCONV \P^\T_L\vc{p}^\flat_k
  &=&   \pP^\rmR_k(\tA^\T)\tvc{r}^\SH_0
% \\
   =    \P^\T_L\pP^\rmR_k(\Pinvt A^\T)\vc{r}^\flat_0 . \NON
\label{eqn:conv_pshadow_pol_p}
\end{eqnarray}
Finally, we have
\begin{eqnarray}
\vc{r}^\flat_k &=&
\pR^\rmR_k(\Pinvt A^\T)\vc{r}^\flat_0 , %\NON
\label{eqn:pshadow_pol_r}
\\
\vc{p}^\flat_k &=&
\pP^\rmR_k(\Pinvt A^\T)\vc{r}^\flat_0 . %\NON
\label{eqn:pshadow_pol_p}
\end{eqnarray}

We note that
(\ref{eqn:plinear_pol_r}), (\ref{eqn:plinear_pol_p}),
(\ref{eqn:pshadow_pol_r}), and (\ref{eqn:pshadow_pol_p})
can also be obtained using (\ref{eqn:pbicg_right_system_pcgs}).

The structures of
 biorthogonality (\ref{eqn:pbiorthogonal})
and
 biconjugacy (\ref{eqn:pbiconjugate})
are as follows:
\begin{eqnarray}
\LA\tvc{r}^\SH_i, \tvc{r}_j\RA
   &\PCONV&   \LA \P^\T_L\vc{r}^\flat_i, \Pinv_L\vc{r}_j\RA
    =    \LA \vc{r}^\flat_i, \vc{r}_j\RA %\NON
\label{eqn:conv_pbiorthogonal1}
\\
   &=&   \LA \pR^\rmR_i(\Pinvt A^\T)\vc{r}^\flat_0,\SP \pR^\rmR_j(A \Pinv)\vc{r}_0\RA
, \NON
\\
\LA\tvc{p}^\SH_i, \tA\tvc{p}_j\RA
   &\PCONV&   \LA \P^\T_L \vc{p}^\flat_i,  (\Pinv_L A \Pinv_R)(\Pinv_L\vc{p}_j)\RA
    =    \LA \vc{p}^\flat_i, (A \Pinv)\vc{p}_j\RA %\NON
\label{eqn:conv_pbiconjugate}
\\
   &=&   \LA \pP^\rmR_i(\Pinvt A^\T)\vc{r}^\flat_0,\SP (A\Pinv) \pP^\rmR_j(A\Pinv)\vc{r}_0\RA
. \NON
\end{eqnarray}

In \Alg~\ref{alg:pbicg_corresp_conv_pcgs},
the structures of 
$\vc{r}^\flat_k = \pR^\rmR_k(\Pinvt A^\T)\vc{r}^\flat_0$
and
$\vc{p}^\flat_k = \pP^\rmR_k(\Pinvt A^\T)\vc{r}^\flat_0$
are fixed,
and
their coefficient matrices are fixed as $\Pinvt A^\T$,
because
the ISRV is $\vc{r}^\flat_0$,
and
$\pR^\rmR_k(\Pinvt A^\T)\vc{r}^\flat_0$
cannot be transformed into
$\Pinvt \pR^\rmR_k(A^\T \Pinvt)\vc{r}^\SH_0 $.
Therefore,
the coefficient matrix of their linear system is $A\Pinv$,
so
 $\P\vc{x}_{k+1} = \P\vc{x}_k + \alpha^\rmR_k P^\rmR_k(A\Pinv)\vc{r}_0$
is structured,
where $\alpha^\rmR_k$ means (\ref{eqn:alpha_rightPBiCG});
and
 \Alg~\ref{alg:pbicg_corresp_conv_pcgs} is confirmed
 to correspond to the right-preconditioned system.

\subsection{PBiCG corresponding to the left system PCGS (Left-PBiCG)}
\label{ssec:pbicg_Llprec}

The left-PBiCG algorithm corresponding to the left-PCGS
can be derived by using the following preconditioning conversion%
\footnote{%
The notation $\vc{r}^+_k$ is important and will be discussed
in section \ref{ssec:analysis_of_4pcgs},
but there is no problem with displaying
$\vc{r}_k$ in the notation of the algorithm.
However,
its internal structure is
$\vc{r}^+_k \equiv \Pinv\vc{r}_k$.
Note that this is also true for $\vc{p}^+_k$.
}
in \Alg~\ref{alg:pbicg_simple}:
\begin{eqnarray}
&&
\tA\PCONV \Pinv A,  \SP
\tvc{x}_k\PCONV\vc{x}_k, \SP
\tvc{b}\PCONV\Pinv\vc{b}, \SP %\NON
\label{eqn:pbicg_corresp_Llpcgs}
\\
&&
\tvc{r}_k \PCONV \vc{r}^+_k, \SP
\tvc{p}_k \PCONV \vc{p}^+_k, \SP % \NON
% \\
\tvc{r}^\SH_k \PCONV \vc{r}^\SH_k, \SP
\tvc{p}^\SH_k \PCONV \vc{p}^\SH_k.  \NON
\end{eqnarray}
% \\

%%%%%%%%%%%%%%%%%%%%%%%%%%%%%%%%%%%%%%%%%%%%%%%%%%%%%%%%%%%%%
\refstepcounter{alg}
\label{alg:pbicg_corresp_Llpcgs}
\noindent
{\bf Algorithm \thealg.
PBiCG algorithm corresponding to left-PCGS:}
\begin{indention}{\ALGWIDTHA}
%%%%%%%%%%%%%%%%%%%%%%%%%%%%%%%%%%%%%%%%%%%%%%%%%%%%%%%%%%%%%
\noindent
$\vc{x}_0$ is an initial guess,
$\SP\vc{r}^+_0= \Pinv\L(\vc{b}-A\vc{x}_0\R), \SP\SP$
set $\SP\beta_{-1}=0$,
\\
$\LA\TL{\vc{r}}^\SH_0, \TL{\vc{r}}_0\RA
 \PCONV \LA\vc{r}^\SH_0, \vc{r}^+_0\RA
 \neq 0$,
e.g.,
$\vc{r}^\sharp_0 = \vc{r}^+_0, \SP$
\\
For $k = 0, 1, 2, \cdots ,$ until convergence, Do:
\begin{eqnarray}
&& \vc{p}^+_k = \vc{r}^+_k + \beta_{k-1}\vc{p}^+_{k-1},\SP\SP \NON \\
&& \vc{p}^\SH_k = \vc{r}^\SH_k + \beta_{k-1}\vc{p}^\SH_{k-1}, \NON
\\
&& \alpha_k = \frac
            {\LA\vc{r}^\SH_k, \vc{r}^+_k\RA}
            {\LA\vc{p}^\SH_k, \Pinv A\vc{p}^+_k\RA}, %\NON
\label{eqn:alpha_leftPBiCG}
\\
&& \vc{x}_{k+1} = \vc{x}_k + \alpha_k\vc{p}^+_k, \NON
\\
&& \vc{r}^+_{k+1} = \vc{r}^+_k - \alpha_k\Pinv A\vc{p}^+_k, \SP\SP \NON \\
&& \vc{r}^\SH_{k+1} = \vc{r}^\SH_k - \alpha_kA^\T\Pinvt\vc{p}^\SH_k, \NON
\\
&& \beta_k = \frac
            {\LA\vc{r}^\SH_{k+1}, \vc{r}^+_{k+1}\RA}
            {\LA\vc{r}^\SH_k, \vc{r}^+_k\RA}, \NON
\end{eqnarray}
End Do
\\

\end{indention}
%%%%%%%%%%%%%%%%%%%%%%%%%%%%%%%%%%%%%%%%%%%%%%%%%%%%%%%%%%%%%

In this algorithm,
the stopping criterion is
\begin{eqnarray}
\frac{\|\vc{r}^+_{k+1}\|}{\|\Pinv\vc{b}\|} \leq \varepsilon .
\label{eqn:conv_judge2}
\end{eqnarray}

The polynomials of the linear system are converted as follows:
\begin{eqnarray}
&&\tvc{r}_k % \PCONV \vc{r}^+_k
               =  \pR^\rmL_k(\tA)\tvc{r}_0
               =  \pR^\rmL_k(\Pinv A)\vc{r}^+_0,
\label{eqn:plinear_pol_r_Llconv}
\\
&&\tvc{p}_k % \PCONV \vc{p}^+_k
               =  \pP^\rmL_k(\tA)\tvc{r}_0
               =  \pP^\rmL_k(\Pinv A)\vc{r}^+_0,
\label{eqn:plinear_pol_p_Llconv}
\end{eqnarray}
and
\begin{eqnarray}
\vc{r}^+_k &=& \pR^\rmL_k(\Pinv A)\vc{r}^+_0, \NON
\label{eqn:plinear_pol2_r_Llconv}
\\
\vc{p}^+_k &=& \pP^\rmL_k(\Pinv A)\vc{r}^+_0. \NON
\label{eqn:plinear_pol2_p_Llconv}
\end{eqnarray}
In the shadow system, we have
\begin{eqnarray}
&&\tvc{r}^\SH_k % \PCONV \vc{r}^\SH_k
              = \pR^\rmL_k(\tA^\T)\tvc{r}^\SH_0
              = \pR^\rmL_k(A^\T \Pinvt)\vc{r}^\SH_0, \NON
\label{eqn:pshadow_pol_r_Llconv}
\\
&&\tvc{p}^\SH_k % \PCONV \vc{p}^\SH_k
              = \pP^\rmL_k(\tA^\T)\tvc{r}^\SH_0
              = \pP^\rmL_k(A^\T \Pinvt)\vc{r}^\SH_0, \NON
\label{eqn:pshadow_pol_p_Llconv}
\end{eqnarray}
and
\begin{eqnarray}
\vc{r}^\SH_k &=& \pR^\rmL_k(A^\T\Pinvt)\vc{r}^\SH_0, \NON
\label{eqn:pshadow_pol2_r_Llconv}
\\
\vc{p}^\SH_k &=& \pP^\rmL_k(A^\T\Pinvt)\vc{r}^\SH_0. \NON
\label{eqn:pshadow_pol2_p_Llconv}
\end{eqnarray}

The structures of
 biorthogonality % (\ref{eqn:pbiorthogonal})
and
 biconjugacy % (\ref{eqn:pbiconjugate})
are as follows:
\begin{eqnarray}
\LA\tvc{r}^\SH_i, \tvc{r}_j\RA
   &\PCONV&   \LA \vc{r}^\SH_i, \vc{r}^+_j\RA %\NON
\label{eqn:pbiorthogonal_Ll}
\\
   &=&   \LA \pR^\rmL_i(A^\T\Pinvt)\vc{r}^\SH_0,
         \SP \pR^\rmL_j(\Pinv A)\vc{r}^+_0\RA
, \NON
\\
\LA\tvc{p}^\SH_i, \tA\tvc{p}_j\RA
   &\PCONV&   \LA \vc{p}^\SH_i, (\Pinv A)\vc{p}^+_j\RA %\NON
\label{eqn:pbiconjugate_Ll}
\\
   &=&   \LA \pP^\rmL_i(A^\T\Pinvt)\vc{r}^\SH_0,
         \SP (\Pinv A) \pP^\rmL_j(\Pinv A)\vc{r}^+_0\RA
. \NON
\end{eqnarray}

In \Alg~\ref{alg:pbicg_corresp_Llpcgs},
the structures of 
$\vc{r}^+_k = \pR^\rmL_k(\Pinv A)\vc{r}^+_0$
and
$\vc{p}^+_k = \pP^\rmL_k(\Pinv A)\vc{r}^+_0$
are fixed,
and
their coefficient matrices are fixed as $\Pinv A$,
because
the initial residual vector is $\vc{r}^+_0$.
Therefore,
$\vc{x}_{k+1} = \vc{x}_k + \alpha^\rmL_k P^\rmL_k(\Pinv A)\vc{r}^+_0$
is structured,
where $\alpha^\rmL_k$ means (\ref{eqn:alpha_leftPBiCG});
and
\Alg~\ref{alg:pbicg_corresp_Llpcgs} is confirmed
to be the left-preconditioned system.
This ISRV is set as $\vc{r}^\SH_0=\vc{r}^+_0$.

For reference,
this algorithm can also be derived by the following conversion:
\begin{eqnarray}
&&
\tA       \PCONV \Pinv_L A \Pinv_R , \SP
\tvc{x}_k \PCONV \P_R    \vc{x}_k ,  \SP
\tvc{b}   \PCONV \Pinv_L \vc{b} ,    \SP %\NON
\label{eqn:pbicg_conv_Lb}
\\
&&
\tvc{r}_k \PCONV \P_R \vc{r}^+_k , \SP
\tvc{p}_k \PCONV \P_R \vc{p}^+_k , \SP %\NON
% \\
\tvc{r}^\SH_k \PCONV \Pinvt_R \vc{r}^\SH_k , \SP
\tvc{p}^\SH_k \PCONV \Pinvt_R \vc{p}^\SH_k.   \NON
\end{eqnarray}

If $\PO_L=\PO$ and $\PO_R=I$ are set,
then this is the same as (\ref{eqn:pbicg_corresp_Llpcgs}).

\subsection{Standard PBiCG}
\label{ssec:improved1_pbicg}

This is the most general algorithm for the PBiCG,
and it corresponds to the PCGS algorithm labeled Improved1 in \cite{itoh2019a}.
This algorithm is derived from the following preconditioning conversion
applied to \Alg~\ref{alg:pbicg_simple}:
\begin{eqnarray}
&&
\tA       \PCONV \Pinv_L A \Pinv_R, \SP
\tvc{x}_k \PCONV \P_R\vc{x}_k,  \SP
\tvc{b}   \PCONV \Pinv_L\vc{b}, \SP %\NON
\label{eqn:pbicg}
\\
&&
\tvc{r}_k \PCONV \Pinv_L\vc{r}_k, \SP
\tvc{p}_k \PCONV \P_R\vc{p}^+_k,  \SP %\NON
% \\
\tvc{r}^\SH_k \PCONV \Pinvt_R \vc{r}^\SH_k, \SP
\tvc{p}^\SH_k \PCONV \P^\T_L   \vc{p}^\flat_k. \NON
\end{eqnarray}
% \\

%%%%%%%%%%%%%%%%%%%%%%%%%%%%%%%%%%%%%%%%%%%%%%%%%%%%%%%%%%%%%
\refstepcounter{alg}
\label{alg:pbicg}
\noindent
{\bf Algorithm \thealg.
Standard PBiCG algorithm:}
\begin{indention}{\ALGWIDTHA}
%%%%%%%%%%%%%%%%%%%%%%%%%%%%%%%%%%%%%%%%%%%%%%%%%%%%%%%%%%%%%
\noindent
$\vc{x}_0$ is an initial guess,
$\SP \vc{r}_0= \vc{b}-A\vc{x}_0, \SP\SP$
set $\SP \beta_{-1}=0$,
\\
$\LA \TL{\vc{r}}^\SH_0, \TL{\vc{r}}_0 \RA
 \PCONV \LA \vc{r}^\SH_0, \Pinv \vc{r}_0 \RA
 \neq 0$,
e.g.,
$\vc{r}^\SH_0 = \Pinv \vc{r}_0, \SP$
 \\
For $k = 0, 1, 2, \cdots ,$ until convergence, Do:
\begin{eqnarray}
&& \vc{p}^+_k = \Pinv\vc{r}_k + \beta_{k-1}\vc{p}^+_{k-1},\SP\SP \NON \\
&& \vc{p}^\flat_k = \Pinvt\vc{r}^\SH_k + \beta_{k-1}\vc{p}^\flat_{k-1}, \NON \\
&& \alpha_k = \frac
            {\LA\vc{r}^\SH_k,   \P^{-1}\vc{r}_k\RA}
            {\LA\vc{p}^\flat_k, A\vc{p}^+_k\RA}, \NON \\
&& \vc{x}_{k+1} = \vc{x}_k + \alpha_k\vc{p}^+_k, \NON \\
&& \vc{r}_{k+1} = \vc{r}_k - \alpha_kA\vc{p}^+_k, \SP\SP \NON \\
&& \vc{r}^\SH_{k+1} = \vc{r}^\SH_k - \alpha_k A^\T\vc{p}^\flat_k, \NON \\
&& \beta_k = \frac
            {\LA\vc{r}^\SH_{k+1}, \Pinv\vc{r}_{k+1}\RA}
            {\LA\vc{r}^\SH_k,     \Pinv\vc{r}_k\RA}, \NON
\end{eqnarray}
End Do
\\

\end{indention}
%%%%%%%%%%%%%%%%%%%%%%%%%%%%%%%%%%%%%%%%%%%%%%%%%%%%%%%%%%%%%

In this algorithm,
the stopping criterion is (\ref{eqn:conv_judge}).

Although sometimes the ISRV is set such that
$( \vc{r}^\SH_0, \vc{r}_0 ) \neq 0,$ e.g., $\vc{r}^\SH_0 = \vc{r}_0 $, in many cases,
we will assume 
$ ( \tvc{r}^\SH_0, \tvc{r}_0 ) \neq 0$,
 e.g., $\vc{r}^\SH_0 = \Pinv\vc{r}_0$,
since
$  ( \tvc{r}^\SH_0, \tvc{r}_0 )
 =
   ( \Pinvt_R \vc{r}^\SH_0, \Pinv_L\vc{r}_0 )
 = ( \vc{r}^\SH_0, \Pinv \vc{r}_0 ) $
from (\ref{eqn:pbicg});
see section~\ref{sec:direction_preconditioning}.

The polynomials of the linear system are converted as
\begin{eqnarray}
\tvc{r}_k %\PCONV \Pinv_L\vc{r}_k
              &=& \pR^\rmL_k(\tA)\tvc{r}_0
% \\
               =  \Pinv_L\pR^\rmL_k(A\Pinv)\vc{r}_0, % \NON
\\
\tvc{p}_k %\PCONV \P_R\vc{p}^+_k
              &=& \pP^\rmL_k(\tA)\tvc{r}_0
% \\
               =  \Pinv_L \pP^\rmL_k(A\Pinv)\vc{r}_0, %\NON
\label{eqn:plinear_pol_p_conv2}
\end{eqnarray}
and
\begin{eqnarray}
\vc{r}_k &=&     \pR^\rmL_k(A\Pinv)\vc{r}_0 %, % \NON
          =  \PO \pR^\rmL_k(\Pinv A)\Pinv\vc{r}_0 ,
\label{eqn:plinear_pol2_r}
\\
\vc{p}^+_k &=& \Pinv \pP^\rmL_k(A\Pinv)\vc{r}_0 %.
            =  \pP^\rmL_k(\Pinv A)\Pinv\vc{r}_0 .
\label{eqn:plinear_pol2_p}
\end{eqnarray}

In the shadow system, we have
\begin{eqnarray}
\tvc{r}^\SH_k %\PCONV \Pinvt_R\vc{r}^\SH_k
              &=& \pR^\rmL_k(\tA^\T)\tvc{r}^\SH_0
% \\
               =  \Pinvt_R \pR^\rmL_k(A^\T \Pinvt)\vc{r}^\SH_0, \NON
\label{eqn:pshadow_pol_r_conv2}
\\
\tvc{p}^\SH_k %\PCONV \P^\T_L\vc{p}^\flat_k
              &=& \pP^\rmL_k(\tA^\T)\tvc{r}^\SH_0
% \\
               =  \Pinvt_R \pP^\rmL_k(A^\T \Pinvt)\vc{r}^\SH_0, \NON
\label{eqn:pshadow_pol_p_conv}
\end{eqnarray}
and
\begin{eqnarray}
\vc{r}^\SH_k &=& \pR^\rmL_k(A^\T\Pinvt)\vc{r}^\SH_0, \NON
\label{eqn:pshadow_pol2_r}
\\
\vc{p}^\flat_k &=& \Pinvt \pP^\rmL_k(A^\T\Pinvt)\vc{r}^\SH_0 . \NON
\label{eqn:pshadow_pol2_p}
\end{eqnarray}

The structures of
biorthogonality % (\ref{eqn:pbiorthogonal})
and
 biconjugacy % (\ref{eqn:pbiconjugate})
are as follows:
\begin{eqnarray}
\LA \tvc{r}^\SH_i, \tvc{r}_j\RA
   &\PCONV&   \LA \Pinvt_R\vc{r}^\SH_i, \Pinv_L\vc{r}_j\RA
    =    \LA \Pinvt\vc{r}^\SH_i, \vc{r}_j\RA
    =    \LA \vc{r}^\SH_i, \Pinv\vc{r}_j\RA %\NON
\label{eqn:pbiorthogonal_impr1}
\\
   &=&   \LA \pR^\rmL_i(A^\T\Pinvt)\vc{r}^\SH_0,\SP \Pinv \pR^\rmL_j(A\Pinv)\vc{r}_0\RA
, \NON
\\
\LA \tvc{p}^\SH_i, \tA\tvc{p}_j\RA
   &\PCONV&   \LA \P^\T_L\vc{p}^\flat_i, (\Pinv_LA\Pinv_R)(\P_R\vc{p}^+_j)\RA
    =    \LA \vc{p}^\flat_i, A\vc{p}^+_j\RA %\NON
\label{eqn:pbiconjugate_impr1}
\\
   &=&   \LA \Pinvt \pP^\rmL_i(A^\T\Pinvt)\vc{r}^\SH_0,\SP A \Pinv \pP^\rmL_j(A\Pinv)\vc{r}_0\RA. \NON
\end{eqnarray}

\begin{remark}
\label{rem:std_pbicg_prec_oper}
In \Alg~\ref{alg:pbicg},
the biorthogonal and biconjugate structures are not immediately apparent when either 
$\Pinv$ operates on the linear system
or
  $\Pinvt$ operates on the shadow system.
However,
\Alg~\ref{alg:pbicg} can be reduced to 
\Alg~\ref{alg:pbicg_corresp_Llpcgs} of the left system
by using $\vc{r}^+_k \equiv \Pinv\vc{r}_k$ and
$\vc{p}^\flat_k \mapsto \Pinvt\vc{p}^\SH_k$;
therefore,
\Alg~\ref{alg:pbicg} is coordinative to the left system.
The structure of the recurrence formula of the solution vector
 is
$ \vc{x}_{k+1} = \vc{x}_k + \alpha^\rmL_k P^\rmL_k(\Pinv A)\vc{r}^+_0
  \mapsto
  \vc{x}_{k+1} = \vc{x}_k + \alpha^\rmL_k \Pinv P^\rmL_k(A\Pinv)\vc{r}_0 $;
 this is obtained
by splitting $\vc{r}^+_0$. % in \Ssec~\ref{ssec:pbicg_Llprec}.
These structures
are verified
theoretically in section~\ref{sec:direction_preconditioning}
 and
numerically in section~\ref{sec:numerical_experiments}.
\end{remark}

\begin{remark}
\label{rem:std_pbicg_prec_pol}
We explicitly provided the equations for the right endpoints of
(\ref{eqn:plinear_pol2_r}) and (\ref{eqn:plinear_pol2_p}).
These are the final structures
for the setting of $\vc{r}^\SH_0 = \Pinv\vc{r}_0$
 (see Example 2 in the \App~\ref{appsec:bahavior_pol_pbicg}).
\end{remark}

\subsection{PBiCG corresponding to Improved2 }
\label{ssec:improved2_pbicg}

The PBiCG algorithm corresponding to the Improved2 PCGS algorithm in \cite{itoh2019a} (Improved2)
is derived from applying the following preconditioning conversion
to \Alg~\ref{alg:pbicg_simple}:
\begin{eqnarray}
&&
\tA        \PCONV \Pinv_L A \Pinv_R,  \SP
\tvc{x}_k  \PCONV \P_R \vc{x}_k, \SP
\tvc{b}    \PCONV \Pinv_L \vc{b}, \SP %\NON
\\
&&
\tvc{r}_k  \PCONV \Pinv_L \vc{r}_k, \SP
\tvc{p}_k  \PCONV \Pinv_L \vc{p}_k, \SP % \NON
% \\
\tvc{r}^\SH_k  \PCONV \Pinvt_R \vc{r}^\SH_k, \SP
\tvc{p}^\SH_k  \PCONV \Pinvt_R \vc{p}^\SH_k.  \NON
\label{eqn:improved2_pbicg}
\end{eqnarray}

This is different from the conversion applied to $\tvc{p}_k$ and $\tvc{p}^\SH_k$
in (\ref{eqn:pbicg}) for \Alg~\ref{alg:pbicg}.
\\

\newpage
%%%%%%%%%%%%%%%%%%%%%%%%%%%%%%%%%%%%%%%%%%%%%%%%%%%%%%%%%%%%%
\refstepcounter{alg}
\label{alg:improved2_pbicg}
\noindent
{\bf Algorithm \thealg.
PBiCG algorithm corresponding to Improved2:}
\begin{indention}{\ALGWIDTHB}
%%%%%%%%%%%%%%%%%%%%%%%%%%%%%%%%%%%%%%%%%%%%%%%%%%%%%%%%%%%%%
\noindent
$\vc{x}_0$ is an initial guess,
$\SP\vc{r}_0= \vc{b}-A\vc{x}_0, \SP\SP$
set $\SP\beta_{-1}=0$,
\\
$\LA \tvc{r}^\SH_0, \tvc{r}_0 \RA
 \PCONV \LA\vc{r}^\SH_0, \Pinv \vc{r}_0\RA
 \neq 0$,
e.g.,
$\vc{r}^\SH_0 = \Pinv\vc{r}_0, \SP$
 \\
For $k = 0, 1, 2, \cdots ,$ until convergence, Do:
\begin{eqnarray}
&& \vc{p}_k = \vc{r}_k + \beta_{k-1}\vc{p}_{k-1}, \SP %\NON
\label{eqn:improved2_pbicg_pol_p}
\\
&& \LA\Pinv\vc{p}_k = \Pinv\vc{r}_k + \beta_{k-1}\Pinv\vc{p}_{k-1},\SP\RA %\NON
\label{eqn:improved2_pbicg_pol_p2}
\\
&& \vc{p}^\SH_k = \vc{r}^\SH_k + \beta_{k-1}\vc{p}^\SH_{k-1}, \NON
\\
&& \alpha_k = \frac
            {\LA\Pinvt\vc{r}^\SH_k, \vc{r}_k\RA}
            {\LA\Pinvt\vc{p}^\SH_k, A\Pinv\vc{p}_k\RA}
 = \frac
            {\LA\vc{r}^\SH_k, \Pinv\vc{r}_k\RA}
            {\LA\Pinvt\vc{p}^\SH_k, A\Pinv\vc{p}_k\RA} , \NON
\\
&& \vc{x}_{k+1} = \vc{x}_k + \alpha_k\Pinv\vc{p}_k, \NON \\
&& \vc{r}_{k+1} = \vc{r}_k - \alpha_kA\Pinv\vc{p}_k, \SP\SP \NON \\
&& \vc{r}^\SH_{k+1} = \vc{r}^\SH_k - \alpha_kA^\T\Pinvt\vc{p}^\SH_k, \NON \\
&& \beta_k = \frac
            {\LA\Pinvt\vc{r}^\SH_{k+1}, \vc{r}_{k+1}\RA}
            {\LA\Pinvt\vc{r}^\SH_k, \vc{r}_k\RA}
 = \frac
            {\LA\vc{r}^\SH_{k+1}, \Pinv\vc{r}_{k+1}\RA}
            {\LA\vc{r}^\SH_k, \Pinv\vc{r}_k\RA} , \NON
\end{eqnarray}
End Do
\\

\end{indention}
%%%%%%%%%%%%%%%%%%%%%%%%%%%%%%%%%%%%%%%%%%%%%%%%%%%%%%%%%%%%%

In this algorithm%
\footnote{
Practically,
(\ref{eqn:improved2_pbicg_pol_p2}) is implemented as
$\vc{p}^+_k\equiv\Pinv\vc{p}_k$,
therefore,
(\ref{eqn:improved2_pbicg_pol_p}) is needless,
and
its preconditioning operations in the iterated part are
just $\Pinvt\vc{p}^\SH_k$ and $\Pinv\vc{r}_k$.
}
,
the stopping criterion is (\ref{eqn:conv_judge}).

The structure of the biorthogonality is the same
as that of (\ref{eqn:pbiorthogonal_impr1}) for \Alg~\ref{alg:pbicg},
because the same preconditioning conversion is used for
$\tvc{r}_k$ and $\tvc{r}^\SH_k$.
The probing direction polynomials of the linear and shadow systems
are converted as
\begin{eqnarray}
\tvc{p}_k %\PCONV \Pinv_L\vc{p}_k
              &=& \pP^\rmL_k(\tA)\tvc{r}_0
               =  \Pinv_L \pP^\rmL_k(A\Pinv)\vc{r}_0 , % \NON
\reeqno{\ref{eqn:plinear_pol_p_conv2}'}
\\
\tvc{p}^\SH_k %\PCONV \Pinvt_R\vc{p}^\SH_k
              &=& \pP^\rmL_k(\tA^\T)\tvc{r}^\SH_0
               =  \Pinvt_R \pP^\rmL_k(A^\T \Pinvt)\vc{r}^\SH_0, %\NON
\label{eqn:pshadow_pol2_p_conv}
\end{eqnarray}
and
\begin{eqnarray}
\vc{p}_k &=&    \pP^\rmL_k(A\Pinv)\vc{r}_0 %, \NON
          =  \PO\pP^\rmL_k(\Pinv A)\Pinv\vc{r}_0 ,
\label{eqn:plinear_pol3_p}
\\
\vc{p}^\SH_k &=& \pP^\rmL_k(A^\T \Pinvt)\vc{r}^\SH_0 . %\NON
\label{eqn:pshadow_pol3_p}
\end{eqnarray}

The structure of the biconjugacy is
\begin{eqnarray}
\LA \tvc{p}^\SH_i, \tA\tvc{p}_j \RA
   &\PCONV&   \LA \Pinvt_R \vc{p}^\SH_i,\SP (\Pinv_L A\Pinv_R)(\Pinv_L \vc{p}_j)\RA
    =  \LA \Pinvt \vc{p}^\SH_i,\SP A\Pinv \vc{p}_j\RA
\NON
\\
   &=& \LA \Pinvt \pP^\rmL_i(A^\T\Pinvt)\vc{r}^\SH_0,\SP A \Pinv \pP^\rmL_j(A\Pinv)\vc{r}_0\RA .
\NON
% \\
\end{eqnarray}
This structure is the same as that of (\ref{eqn:pbiconjugate_impr1})
for \Alg~\ref{alg:pbicg}.
This ISRV is set as $\vc{r}^\SH_0=\Pinv\vc{r}_0$.

\begin{remark}
\label{rem:impr2_pbicg_prec_oper}
As before, in \Alg~\ref{alg:improved2_pbicg},
the biorthogonal and biconjugate structures are not immediately apparent when either
$\Pinv$  operates on the linear system
or
  $\Pinvt$ operates on the shadow system.
However,
the structure of the recurrence formula of the solution vector
 is again
$ \vc{x}_{k+1} = \vc{x}_k + \alpha^\rmL_k \Pinv P^\rmL_k(A\Pinv)\vc{r}_0$,
because
\Alg~\ref{alg:improved2_pbicg} is equivalent to \Alg~\ref{alg:pbicg}
on the $\alpha_k$ and $\beta_k$,
the residual and shadow residual vectors, respectively.
These properties are verified
theoretically in section~\ref{sec:direction_preconditioning}
 and
numerically in section~\ref{sec:numerical_experiments}.
\end{remark}

\begin{remark}
\label{rem:impr2_pbicg_prec_pol}
We explicitly provided (\ref{eqn:plinear_pol3_p}) for the right endpoint.
This is the final structure obtained for $\vc{r}^\SH_0 = \Pinv\vc{r}_0$
 (see \Rem~{\it \ref{rem:std_pbicg_prec_pol}}).
\end{remark}

\subsection{Characteristic features of the four PBiCG algorithms}
\label{ssec:analysis_of_4pcgs}

In this section, we present the characteristics of each of the PBiCG algorithms.
These include
the construction of the ISRV,
the biorthogonal and biconjugate structures of the
$\alpha_k$ and $\beta_k$,
and
the structures of the recurrence formula of the solution vector.
In the following equations, the underlined inner products are
the typical descriptions on $\alpha_k$ and $\beta_k$.

\begin{itemize}

\item
PBiCG corresponding to the conventional PCGS (\Alg~\ref{alg:pbicg_corresp_conv_pcgs}):
\begin{eqnarray}
\vc{r}^\flat_0 &=& \vc{r}_0, \NON
\\
\LA\tvc{r}^\SH_\i, \tvc{r}_\j\RA
% \\
   &=&   \LA \pR^\rmR_\i(\Pinvt A^\T)\vc{r}^\flat_0, \SP \pR^\rmR_\j(A \Pinv)\vc{r}_0\RA
    =    \UL{ \LA \vc{r}^\flat_\i, \vc{r}_\j\RA }
, \NON
\\
\LA\tvc{p}^\SH_\i, \tA\tvc{p}_\j\RA
% \\
   &=&   \LA \pP^\rmR_\i(\Pinvt A^\T)\vc{r}^\flat_0, \SP \L(A\Pinv\R) \pP^\rmR_\j(A\Pinv)\vc{r}_0\RA
\NON
\\
    &=&    \UL{ \LA \vc{p}^\flat_\i, (A \Pinv)\vc{p}_\j\RA }
, \NON
\\
\P\vc{x}_{k+1} &=& \P\vc{x}_k + \alpha^\rmR_k P^\rmR_k(A\Pinv)\vc{r}_0 . \NON
\end{eqnarray}

\item
Left-PBiCG (\Alg~\ref{alg:pbicg_corresp_Llpcgs}):
\begin{eqnarray}
\vc{r}^\sharp_0 &=& \vc{r}^+_0, \NON
\\
\LA\tvc{r}^\SH_\i, \tvc{r}_\j\RA
   &=&   \LA \pR^\rmL_\i(A^\T\Pinvt)\vc{r}^\SH_0, \SP \pR^\rmL_\j(\Pinv A)\vc{r}^+_0\RA
    =    \UL{ \LA \vc{r}^\SH_\i, \vc{r}^+_\j\RA }
, \NON
\\
\LA\tvc{p}^\SH_\i, \tA\tvc{p}_\j\RA
   &=&   \LA \pP^\rmL_\i(A^\T\Pinvt)\vc{r}^\SH_0, \SP \L(\Pinv A\R)\pP^\rmL_\j(\Pinv A)\vc{r}^+_0\RA
\NON
\\ 
    &=&    \UL{ \LA \vc{p}^\SH_\i, (\Pinv A)\vc{p}^+_\j\RA }
, \NON
\\
\vc{x}_{k+1} &=& \vc{x}_k + \alpha^\rmL_k P^\rmL_k(\Pinv A)\vc{r}^+_0
. \NON
\end{eqnarray}

\item
Standard PBiCG (\Alg~\ref{alg:pbicg}):
\begin{eqnarray}
\vc{r}^\SH_0 &=& \Pinv \vc{r}_0, \NON
\\
\LA\tvc{r}^\SH_\i, \tvc{r}_\j\RA
   &=&   \LA \pR^\rmL_\i(A^\T\Pinvt)\vc{r}^\SH_0, \SP  \Pinv \pR^\rmL_\j(A\Pinv)\vc{r}_0\RA
    =    \UL{ \LA \vc{r}^\SH_\i, \Pinv\vc{r}_\j\RA }
, \NON
\\
\LA\tvc{p}^\SH_\i, \tA\tvc{p}_\j\RA
   &=&   \LA \Pinvt \pP^\rmL_\i(A^\T\Pinvt)\vc{r}^\SH_0, \SP A\Pinv \pP^\rmL_\j(A\Pinv)\vc{r}_0\RA
\NON
\\
    &=&    \UL{ \LA \vc{p}^\flat_\i, A\vc{p}^+_\j\RA }
, \NON
\\
\vc{x}_{k+1} &=& \vc{x}_k + \alpha^\rmL_k \Pinv P^\rmL_k(A\Pinv)\vc{r}_0
. \NON
\end{eqnarray}

\item
PBiCG corresponding to Improved2 (\Alg~\ref{alg:improved2_pbicg}):
\begin{eqnarray}
\vc{r}^\SH_0 &=& \Pinv \vc{r}_0, \NON
\\
\LA\tvc{r}^\SH_\i, \tvc{r}_\j\RA
   &=&   \LA \Pinvt \pR^\rmL_\i(A^\T\Pinvt)\vc{r}^\SH_0, \SP \pR^\rmL_\j(A\Pinv)\vc{r}_0\RA \NON
\\
   &=&   \UL{ \LA \Pinvt\vc{r}^\SH_\i, \vc{r}_\j\RA }
    =         \LA \vc{r}^\SH_\i, \Pinv \vc{r}_\j\RA
, \NON
\\
\LA\tvc{p}^\SH_\i, \tA\tvc{p}_\j\RA
   &=&   \LA \Pinvt \pP^\rmL_\i(A^\T\Pinvt)\vc{r}^\SH_0, \SP A \Pinv \pP^\rmL_\j(A\Pinv)\vc{r}_0\RA \NON
\\
   &=&   \UL{ \LA \Pinvt\vc{p}^\SH_\i, A\Pinv \vc{p}_\j\RA }
    =         \LA \vc{p}^\SH_\i, (\Pinv A)(\Pinv \vc{p}_\j) \RA
, \NON
\\
\vc{x}_{k+1} &=& \vc{x}_k + \alpha^\rmL_k \Pinv P^\rmL_k(A\Pinv)\vc{r}_0
. \NON
\end{eqnarray}

\end{itemize}

Although, superficially, it appears that the
solution vector has the same recurrence relation in both
\Alg~\ref{alg:pbicg_corresp_conv_pcgs}
and 
\Alg~\ref{alg:improved2_pbicg}
($\vc{x}_{k+1} = \vc{x}_k + \alpha_k\Pinv\vc{p}_k$),
they belong to different systems
because
in \Alg~\ref{alg:pbicg_corresp_conv_pcgs}, we have
$\alpha^\rmR_k$ and
$\vc{p}_k = \pP^\rmR_k(A\Pinv)\vc{r}_0 \equiv \vc{p}^\rmR_k$,
whereas in
\Alg~\ref{alg:improved2_pbicg}, we have
$\alpha^\rmL_k$ and
$\vc{p}_k = \pP^\rmL_k(A\Pinv)\vc{r}_0 \equiv \vc{p}^\rmL_k$.

We have the following proposition about
the direction of the preconditioning conversion%
\footnote{
Although this property has been repeatedly discussed in the literature,
it should be considered when evaluating
the direction of a preconditioned system.
}%
.
\begin{proposition}[Congruency]
\label{prop:congruency}
There is congruence to a PBiCG algorithm
in the direction of the preconditioning conversion.
\end{proposition}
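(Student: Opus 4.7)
The plan is to deduce the proposition directly from Proposition \ref{prop:pbicg_biortho_biconj_alpha_beta} and the explicit polynomial/Krylov subspace structures derived for each of the four algorithms in Section \ref{ssec:analysis_of_4pcgs}. By Proposition \ref{prop:pbicg_biortho_biconj_alpha_beta}, the direction of a preconditioned system is completely determined by how the biorthogonality and biconjugacy conditions are realized in the computation of $\alpha_k$ and $\beta_k$. So to establish congruency it suffices to show that, for each of the four preconditioning conversions considered, these inner-product operations force a unique polynomial family ($\pR^\rmR,\pP^\rmR$ versus $\pR^\rmL,\pP^\rmL$) and a unique coefficient matrix ($A\Pinv$ versus $\Pinv A$) in the associated Krylov subspace, regardless of the particular bookkeeping of preconditioning operators in the recurrences.

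First, I would invoke the recurrences (\ref{eqn:prec_init_pol})--(\ref{eqn:prec_prob_pol}) together with the conversions (\ref{eqn:bicg_conv_tilde_rL})--(\ref{eqn:bicg_conv_tilde_pS}) to express $\tvc{r}_k,\tvc{p}_k,\tvc{r}^\SH_k,\tvc{p}^\SH_k$ in each algorithm as explicit polynomials in $\tA$ (resp.\ $\tA^\T$) applied to the corresponding initial vector. This is already done case by case in Sections \ref{ssec:pbicg_correspond_conv}--\ref{ssec:improved2_pbicg}, so I would simply consolidate those identities. Next I would substitute these polynomial forms into the defining inner products for $\alpha_k$ and $\beta_k$ in \Alg~\ref{alg:pbicg_simple}, yielding the underlined expressions listed in Section \ref{ssec:analysis_of_4pcgs}. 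This step pins down the Krylov subspace and the direction of preconditioning unambiguously, because the coefficient matrix appearing inside the polynomial and the initial vector to which it is applied are read off directly from the inner product.

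Then I would treat the apparently ambiguous pair \Alg~\ref{alg:pbicg_corresp_conv_pcgs} and \Alg~\ref{alg:improved2_pbicg}, both of which update the approximate solution via $\vc{x}_{k+1}=\vc{x}_k+\alpha_k\Pinv\vc{p}_k$. Here the main subtlety is that the superficial form of the update does not determine the direction; what does is the polynomial family of $\vc{p}_k$ together with the ISRV prescription. In \Alg~\ref{alg:pbicg_corresp_conv_pcgs} the ISRV is $\vc{r}^\flat_0 = \vc{r}_0$ and $\vc{p}_k = \pP^\rmR_k(A\Pinv)\vc{r}_0$, so $\P\vc{x}_k \in \P\vc{x}_0 + {\cal K}^\rmR_k(A\Pinv,\vc{r}_0)$, placing it in the right-preconditioned class. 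In \Alg~\ref{alg:improved2_pbicg} the ISRV is $\vc{r}^\SH_0=\Pinv\vc{r}_0$ and $\vc{p}_k = \pP^\rmL_k(A\Pinv)\vc{r}_0 = \PO\pP^\rmL_k(\Pinv A)\Pinv\vc{r}_0$, so $\vc{x}_k \in \vc{x}_0 + \Pinv{\cal K}^\rmL_k(A\Pinv,\vc{r}_0)$, which is (up to splitting by $\Pinv$) the left-preconditioned class. The differing inner-product structures of $\alpha_k$ and $\beta_k$ then force the two $\vc{p}_k$ sequences to be generated by genuinely different three-term recurrences, proving that the direction is invariant within the algorithm's equivalence class.

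I expect the main obstacle to be making ``congruence'' precise enough for a formal statement: the natural reading is that algorithms belonging to the same direction of preconditioning conversion generate the same biorthogonal polynomials (and hence the same Krylov subspace) when expressed in terms of the tilde variables, so that all permissible rewritings via (\ref{eqn:plinear}\sq)--(\ref{eqn:plinear_conv}\sq) move within a single equivalence class. Once this is granted, the proof reduces to the polynomial identifications assembled above and the observation that the ISRV choice fixes one of the two possible families; both facts are already present in Sections \ref{ssec:pbicg_correspond_conv}--\ref{ssec:improved2_pbicg}, so only a compact reminder is needed to close the argument.
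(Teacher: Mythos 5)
You have proved a neighbouring statement, not this one. Read against the paper's own proof, \Prop~\ref{prop:congruency} asserts that a \emph{fixed} PBiCG algorithm is invariant under the direction of the \emph{preconditioning conversion used to derive it}: the paper verifies that \Alg~\ref{alg:pbicg_corresp_conv_pcgs} is obtained from the two-sided conversion (\ref{eqn:pbicg_corresp_conv_pcgs}) with general $\PO=\PO_L\PO_R$, again from the special case $\PO_L=I$, $\PO_R=\PO$ (which collapses (\ref{eqn:pbicg_corresp_conv_pcgs}) to (\ref{eqn:pbicg_right_system_pcgs})), and again from $\PO_L=\PO$, $\PO_R=I$, and then notes that the same three-way check goes through for Algorithms~\ref{alg:pbicg_corresp_Llpcgs}, \ref{alg:pbicg} and \ref{alg:improved2_pbicg} (e.g.\ (\ref{eqn:pbicg_corresp_Llpcgs}) versus (\ref{eqn:pbicg_conv_Lb})). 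That is the whole content of the proof, and it is exactly the step you defer: in your last paragraph you say that once it is ``granted'' that all permissible rewritings via (\ref{eqn:plinear}\sq)--(\ref{eqn:plinear_conv}\sq) stay in one equivalence class, the rest follows --- but that granted fact \emph{is} the proposition, and it has to be established by exhibiting the derivations from each conversion direction and checking they coincide.

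What you actually argue --- that the realized direction of the preconditioned system is pinned down by the biorthogonal/biconjugate inner products in $\alpha_k$, $\beta_k$ together with the polynomial families and the ISRV --- is the content of \Prop~\ref{prop:pbicg_biortho_biconj_alpha_beta} and \Thm~\ref{thm:dir_prec_by_isrv}, and your discussion of the $\vc{x}_{k+1}=\vc{x}_k+\alpha_k\Pinv\vc{p}_k$ ambiguity between \Alg~\ref{alg:pbicg_corresp_conv_pcgs} and \Alg~\ref{alg:improved2_pbicg} reproduces the paragraph of \Sec~\ref{ssec:analysis_of_4pcgs} immediately preceding the proposition rather than proving the proposition itself. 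Indeed the proposition exists precisely because the conversion direction does \emph{not} determine the system direction (so the latter must be determined by $\alpha_k$, $\beta_k$ and the ISRV); an argument that only establishes the determination by $\alpha_k$, $\beta_k$ and the ISRV cannot substitute for the invariance-under-conversion check. To close the gap you would need to carry out, for each of the four algorithms, the explicit comparison of the algorithm obtained from the two-sided conversion with the ones obtained from the two one-sided specializations of $\PO_L$, $\PO_R$.
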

\begin{proof}
We have already shown the following instances:
the
PBiCG of the right system
 (\Alg~\ref{alg:pbicg_corresp_conv_pcgs}) can be derived
from the two-sided conversion (\ref{eqn:pbicg_corresp_conv_pcgs});
if $\PO_L=I$ and $\PO_R=\PO $,
the conversion of (\ref{eqn:pbicg_corresp_conv_pcgs}) is reduced
to that of (\ref{eqn:pbicg_right_system_pcgs}),
then \Alg~\ref{alg:pbicg_corresp_conv_pcgs} is derived.
Still if $\PO_L=\PO$, $\PO_R=I $,
then \Alg~\ref{alg:pbicg_corresp_conv_pcgs} can be derived.
Each of the other preconditioned algorithms
(\Alg~\ref{alg:pbicg_corresp_Llpcgs}, \ref{alg:pbicg},
and \ref{alg:improved2_pbicg}) has the same relationship to 
its corresponding preconditioning conversion.
\end{proof}

%%%%%%%%%%%%%%%%%%%%%%%%%%%%%%%%%%%%%%%%%%%%%%%%%%%%%%%%%%%%%%%%%%%%%%%%%%

\begin{proposition}
\label{prop:pbicg_biortho}
On the structure of biorthogonality $(\tvc{r}^\SH_k, \tvc{r}_k)$
in the iterated part of each PBiCG in the appeared four algorithms,
there exists a single preconditioning operator
between
  $\vc{r}_k$ (basic form of the residual vector)
and
  $\vc{r}^\SH_k$ (basic form of the shadow residual vector),
such that
  $\Pinv$ operates on $\vc{r}_k$
or
  $\Pinvt$ operates on $\vc{r}^\SH_k$.

Here,
  the basic form of the residual vector of a linear system includes
  its polynomial structure of
  $\vc{r}_k=\pR_k(A\Pinv)\vc{r}_0$,
and
  the basic form of the shadow residual vector includes
  its polynomial structure of
  $\vc{r}^\SH_k=\pR_k(A^\T\Pinvt)\vc{r}^\SH_0$;
these vectors and polynomials are not considered when
setting the ISRV.
\end{proposition}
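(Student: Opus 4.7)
The plan is to verify the claim by direct case analysis across the four PBiCG algorithms enumerated in Section~\ref{ssec:analysis_of_4pcgs}, using the preconditioning conversions recorded in each of Sections~\ref{ssec:pbicg_correspond_conv}--\ref{ssec:improved2_pbicg}. In every case, after substitution of the conversion rules, the biorthogonal inner product $\LA \tvc{r}^\SH_i, \tvc{r}_j \RA$ reduces to a pairing of the basic-form vectors $\vc{r}_j$ and $\vc{r}^\SH_i$ separated by exactly one factor of $\Pinv$ or $\Pinvt$; the proposition is then read off directly.

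First I would unpack the hidden notation in the flat/plus symbols. In Algorithm~\ref{alg:pbicg_corresp_conv_pcgs}, by the definition given in the footnote of Section~\ref{ssec:pbicg_correspond_conv}, $\vc{r}^\flat_k \equiv \Pinvt \vc{r}^\SH_k$, so the conversion (\ref{eqn:conv_pbiorthogonal1}) becomes $\LA \vc{r}^\flat_i, \vc{r}_j \RA = \LA \Pinvt \vc{r}^\SH_i, \vc{r}_j \RA$: a single $\Pinvt$ acting on the shadow side. Symmetrically, in Algorithm~\ref{alg:pbicg_corresp_Llpcgs}, $\vc{r}^+_k \equiv \Pinv \vc{r}_k$ yields $\LA \vc{r}^\SH_i, \vc{r}^+_j \RA = \LA \vc{r}^\SH_i, \Pinv \vc{r}_j \RA$: a single $\Pinv$ acting on the residual side. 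For the Standard PBiCG (Algorithm~\ref{alg:pbicg}), the conversions $\tvc{r}^\SH_k \PCONV \Pinvt_R \vc{r}^\SH_k$ and $\tvc{r}_k \PCONV \Pinv_L \vc{r}_k$ give $\LA \Pinvt_R \vc{r}^\SH_i, \Pinv_L \vc{r}_j \RA = \LA \vc{r}^\SH_i, \Pinv \vc{r}_j \RA$ after using $\Pinv_R \Pinv_L = \Pinv$. Finally, the Improved2 algorithm (Algorithm~\ref{alg:improved2_pbicg}) produces the equivalent form $\LA \Pinvt \vc{r}^\SH_i, \vc{r}_j \RA$, as listed in Section~\ref{ssec:analysis_of_4pcgs}.

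Second, I would argue that each of these reduced expressions is compatible with the basic-form polynomial structures $\vc{r}_k = \pR_k(A\Pinv)\vc{r}_0$ and $\vc{r}^\SH_k = \pR_k(A^\T\Pinvt)\vc{r}^\SH_0$ identified in each subsection, so the extra $\Pinv$ (or $\Pinvt$) is genuinely an additional preconditioning operator and not merely a reabsorption of one already present in the polynomial. This separates the ISRV question (addressed by Theorem~\ref{thm:dir_prec_by_isrv}) from the operator-count question at hand, justifying the clause in the statement that ``these vectors and polynomials are not considered when setting the ISRV.''

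The main obstacle is largely bookkeeping rather than analysis: one must be careful not to double-count preconditioning operators concealed in the symbols $\vc{r}^\flat_k$ and $\vc{r}^+_k$, and one must verify that the two-sided split $\Pinv = \Pinv_R \Pinv_L$ always collapses cleanly inside the inner product so that neither $\Pinv_L$ nor $\Pinv_R$ appears separately in the reduced form. Once this is handled consistently across the four cases, the four underlined expressions in Section~\ref{ssec:analysis_of_4pcgs} exhaust the possibilities, and the proposition follows.
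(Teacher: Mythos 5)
Your proposal is correct and follows essentially the same route as the paper: the paper's proof is exactly a case-by-case reduction over the four algorithms, splitting $\vc{r}^\flat_k \mapsto \Pinvt\vc{r}^\SH_k$ and $\vc{r}^+_k \mapsto \Pinv\vc{r}_k$ so that each biorthogonal inner product exposes a single $\Pinv$ on the residual side or $\Pinvt$ on the shadow side, together with the two-sided collapse $\Pinvt_R$, $\Pinv_L$ into one operator. The paper presents this twice (once at the vector level, once at the polynomial level $\pR_i(A^\T\Pinvt)\vc{r}^\SH_0$ versus $\pR_j(A\Pinv)\vc{r}_0$), which corresponds precisely to your first and second paragraphs.
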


{\bf Proof }
1) From the viewpoint of the matrix and vector structure
of each algorithm:

We split
$\vc{r}^\flat_0 \mapsto \Pinvt\vc{r}^\SH_0$ and
$\vc{r}^+_k     \mapsto \Pinv \vc{r}_k$,
in Algorithms \ref{alg:pbicg_corresp_conv_pcgs} to \ref{alg:improved2_pbicg}
then we set
\begin{eqnarray}
\L( \tvc{r}^\SH_k, \tvc{r}_k \R)
   &=&   \UL{ \L( \vc{r}^\flat_k, \SP \vc{r}_k \R) }
 \mapsto \L( \Pinvt \vc{r}^\SH_k, \SP \vc{r}_k \R) , \NON
\label{eqn:bicg_precb_biortho_detail}
\\
\L( \tvc{r}^\SH_k, \tvc{r}_k \R)
   &=&   \UL{ \L(\vc{r}^\SH_k, \SP \vc{r}^+_k\R) }
 \mapsto \L( \vc{r}^\SH_k, \SP \Pinv \vc{r}_k \R) , \NON
\label{eqn:bicg_Llprec_biortho_detail}
\\
\L( \tvc{r}^\SH_k, \tvc{r}_k \R)
   &=&   \UL{ \L(\vc{r}^\SH_k, \SP \Pinv\vc{r}_k\R) }, \NON
\label{eqn:bicg_impr1_biortho_detail}
\\
\L( \tvc{r}^\SH_k, \tvc{r}_k \R)
   &=&   \UL{ \L(\Pinvt\vc{r}^\SH_k, \SP \vc{r}_k\R) }
    =    \L( \vc{r}^\SH_k, \SP \Pinv\vc{r}_k \R). \NON
\label{eqn:bicg_impr2_biortho_detail}
\end{eqnarray}

The underlined inner products are the typical descriptions for the various PBiCG.

In addition,
for the two-sided conversion,
we obtain
\begin{eqnarray}
\L( \tvc{r}^\SH_k, \tvc{r}_k \R)
   &=&   \L(\Pinvt_R\vc{r}^\SH_k, \SP \Pinv_L\vc{r}_k\R)
    =    \L(\Pinvt\vc{r}^\SH_k, \SP \vc{r}_k\R)
    =    \L( \vc{r}^\SH_k, \SP \Pinv\vc{r}_k \R) . \NON
\label{eqn:bicg_two_biortho_detail}
\qquad
\hfill\Box
\end{eqnarray}

{\bf Proof }
2) From the viewpoint of the polynomial of the residual vector:

We split
$\vc{r}^\flat_0 \mapsto \Pinvt\vc{r}^\SH_0$ and
$\vc{r}^+_k     \mapsto \Pinv \vc{r}_k$ in Algorithms \ref{alg:pbicg_corresp_conv_pcgs} to \ref{alg:improved2_pbicg},
then we set
\begin{eqnarray}
\LA\TL{\vc{r}}^\SH_\i, \TL{\vc{r}}_\j\RA
   &=&   \UL{ \LA \pR_\i(\Pinvt A^\T)\vc{r}^\flat_0, \SP \pR_\j(A \Pinv)\vc{r}_0\RA }
\NON
\label{eqn:bicg_precb_biortho_pol}
\\
   &\mapsto& \LA \Pinvt \pR_\i(A^\T\Pinvt )\vc{r}^\SH_0, \SP \pR_\j(A \Pinv)\vc{r}_0\RA ,
\NON
\\
\LA\TL{\vc{r}}^\SH_\i, \TL{\vc{r}}_\j\RA
   &=&   \UL{ \LA \pR_\i(A^\T\Pinvt)\vc{r}^\SH_0, \SP \pR_\j(\Pinv A)\vc{r}^+_0\RA }
\NON
\label{eqn:bicg_Llprec_biortho_pol}
\\
   &\mapsto& \LA \pR_\i(A^\T\Pinvt)\vc{r}^\SH_0, \SP \Pinv \pR_\j(A\Pinv )\vc{r}_0\RA ,
\NON
\\
\LA\TL{\vc{r}}^\SH_\i, \TL{\vc{r}}_\j\RA
   &=&   \UL{ \LA \pR_\i(A^\T\Pinvt)\vc{r}^\SH_0, \SP  \Pinv \pR_\j(A\Pinv)\vc{r}_0\RA } ,
\NON
\label{eqn:bicg_impr1_biortho_pol}
\\
\LA\TL{\vc{r}}^\SH_\i, \TL{\vc{r}}_\j\RA
   &=&   \UL{ \LA \Pinvt \pR_\i(A^\T\Pinvt)\vc{r}^\SH_0, \SP \pR_\j(A\Pinv)\vc{r}_0\RA } . % \NON
\NON
\label{eqn:bicg_impr2_biortho_pol}
\end{eqnarray}

The underlined inner products are the structures of the polynomial
corresponding to the residual vectors in each PBiCG.

In addition,
for the two-sided conversion,
we obtain
\begin{eqnarray}
\LA\TL{\vc{r}}^\SH_\i, \TL{\vc{r}}_\j\RA
   &=&   \LA \pR_\i(\Pinvt_R A^\T\Pinvt_L)\Pinvt_R\vc{r}^\SH_0,
      \SP  \pR_\j(\Pinv_L A\Pinv_R)\Pinv_L\vc{r}_0\RA  \NON
\\
   &=&   \LA \Pinvt_R \pR_\i(A^\T\Pinvt)\vc{r}^\SH_0,
      \SP  \Pinv_L \pR_\j(A\Pinv)\vc{r}_0\RA  \NON
\\
   &=&   \LA \Pinvt\pR_\i(A^\T\Pinvt)\vc{r}^\SH_0, \SP  \pR_\j(A\Pinv)\vc{r}_0\RA
\NON
\\
   &=&   \LA \pR_\i(A^\T\Pinvt)\vc{r}^\SH_0, \SP  \Pinv \pR_\j(A\Pinv)\vc{r}_0\RA .
\NON
\label{eqn:bicg_two_biortho_pol}
\qquad
\hfill\Box
\end{eqnarray}

\begin{corollary}
\label{cor:pbicg_biconj}
In the biconjugate structure $(\tvc{p}^\SH_k, \tA\tvc{p}_k)$
in the iterated part of each PBiCG algorithm,
there exists a single preconditioning operator
between
  $A$ (coefficient matrix)
and
  $\vc{p}^\SH_k$ (basic form of the shadow probing direction vector),
such that $\Pinv$ operates on $A$
or
  $\Pinvt$ operates on $\vc{p}^\SH_k$;
furthermore,
there exists a single preconditioning operator
between
  $A$
and
  $\vc{p}_k$ (basic form of the probing direction vector).

Here,
  the basic form of the probing direction vector of a linear system includes
  the polynomial structure of
  $\vc{p}_k=\pP_k(A\Pinv)\vc{p}_0$,
and
  the basic form of the shadow probing direction vector includes
  the polynomial structure of
  $\vc{p}^\SH_k=\pP_k(A^\T\Pinvt)\vc{p}^\SH_0$.
These vectors and polynomials are not considered when
setting the ISRV.
\end{corollary}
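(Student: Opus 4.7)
The plan is to follow the same two-pronged strategy used for Proposition \ref{prop:pbicg_biortho}, since the biconjugate pairing $(\tvc{p}^\SH_k, \tA \tvc{p}_k)$ is structurally the biorthogonal pairing with an additional factor of $\tA = \Pinv_L A \Pinv_R$ inserted. The role previously played by a single identity residual is now played by one matrix $A$ together with two preconditioning factors coming from $\tA$, and the vector-side preconditioning factors are exactly those of the probing direction vectors, which by construction mirror the conversions used for the residual vectors in each algorithm.

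First I would carry out the matrix-and-vector argument algorithm by algorithm, exactly as in part 1) of the proof of Proposition \ref{prop:pbicg_biortho}. For each of Algorithms \ref{alg:pbicg_corresp_conv_pcgs}--\ref{alg:improved2_pbicg}, substitute the conversions (\ref{eqn:pbicg_corresp_conv_pcgs}), (\ref{eqn:pbicg_corresp_Llpcgs}), (\ref{eqn:pbicg}), (\ref{eqn:improved2_pbicg}) into $(\tvc{p}^\SH_k, \tA \tvc{p}_k)$ together with $\tA = \Pinv_L A \Pinv_R$. The products $\PO_R \Pinv_R$ and $\PO^\T_L \Pinv^\T_L$ collapse to identities, leaving in each case exactly one of $\Pinv$ or $\Pinvt$ between $A$ and the underlying probing direction vectors, matching the underlined expressions in Section~\ref{ssec:analysis_of_4pcgs}. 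Performing the splits $\vc{p}^\flat_k \mapsto \Pinvt \vc{p}^\SH_k$ and $\vc{p}^+_k \mapsto \Pinv \vc{p}_k$ then confirms the stated single-operator structure between $A$ and $\vc{p}^\SH_k$ and between $A$ and $\vc{p}_k$ in all four algorithms, and an analogous calculation handles the general two-sided conversion.

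Second, I would redo the verification at the polynomial level, as in part 2) of the proof of Proposition \ref{prop:pbicg_biortho}, using the probing direction polynomial identities (\ref{eqn:plinear_pol_p}), (\ref{eqn:plinear_pol2_p}), (\ref{eqn:pshadow_pol_p}), (\ref{eqn:plinear_pol3_p}), (\ref{eqn:pshadow_pol3_p}), together with their shadow counterparts. Since $\tA \pP_j(\tA) = \pP_j(\tA) \tA$ and analogously on the shadow side, the factor $\tA$ commutes past the polynomial when it is written in the appropriate form, and it recombines with a leading preconditioning factor to yield either a bare $A$ on one side with $\Pinv$ on the other, or $A\Pinv$ on one side against $\Pinvt$ on the other. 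Matching against the underlined expressions in Section~\ref{ssec:analysis_of_4pcgs} finishes the corollary.

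The main obstacle is bookkeeping rather than genuine difficulty: ensuring that in the two-sided case the products of $\PO_L, \PO_R, \Pinv_L, \Pinv_R$ arising from $\tA$ and from the two vector conversions simplify to a single $\Pinv$ (or $\Pinvt$) with no residual asymmetry between the left and right splittings. Once one verifies this collapse holds in the generic two-sided conversion (\ref{eqn:pbicg}), the right-, left-, and mixed-system algorithms then follow as specializations by the congruency of Proposition \ref{prop:congruency}, and the claim about non-consideration of the ISRV is inherited verbatim from Proposition \ref{prop:pbicg_biortho}.
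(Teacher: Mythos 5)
Your proposal is correct and matches the paper's intent: the paper states this as a corollary with no separate proof, precisely because it follows by repeating the two-part argument of Proposition~\ref{prop:pbicg_biortho} on the biconjugate pairing, and the required computations (the conversions of $(\tvc{p}^\SH_i,\tA\tvc{p}_j)$ and the underlined biconjugacy expressions) are already displayed in sections~\ref{ssec:pbicg_correspond_conv}--\ref{ssec:analysis_of_4pcgs}. Your plan reproduces exactly those calculations, including the collapse of $\PO_R\Pinv_R$ and $\PO^\T_L\Pinvt_L$ and the splits $\vc{p}^\flat_k \mapsto \Pinvt\vc{p}^\SH_k$, $\vc{p}^+_k \mapsto \Pinv\vc{p}_k$, so no gap remains.
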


%%%%%%  section  %%%%%%%%%%%%%%%%%%%%%%%%%%%%%%%%%%%%%%%%%%%%%%%%%%%%%%%%%%%%%%%%%
\section{Switching the direction of the preconditioned system for the BiCG method}
\label{sec:direction_preconditioning}

From the analyses presented in the previous sections and in \cite{itoh2019a},
we know that the intrinsic biorthogonal and biconjugate structures
of the preconditioned system
are the same for each of the four PBiCG algorithms
and their corresponding PCGS algorithms,
and this is independent of the setting of the ISRV.
We now consider the other
factor that can switch the direction of the preconditioning:
the construction and setting of the ISRV.

As stated above, if the coefficient matrix $A$ is SPD
and $\vc{r}^\SH_0=\vc{r}_0$,
then the BiCG method is mathematically equivalent to
the CG method.
However,
the BiCG method solves systems of linear equations
that correspond to a nonsymmetric coefficient matrix,
and
the ISRV $\vc{r}^\SH_0$ is usually
regarded as arbitrary,
providing that $(\vc{r}^\SH_0, \vc{r}_0)\neq 0$.
On the other hand,
we may construct an arbitrary vector $\vc{r}^\SH_0 = U\vc{r}_0$,
such that $(\vc{r}^\SH_0, U \vc{r}_0)\neq 0$.
Here,
the matrix $U$ is unprescribed.
Obviously,
$U\vc{r}_0$ can generate random vectors.
However,
by the appropriate construction of $U$,
the BiCG can be reduced to the other method \cite{abe2010, abe2007}.
We show this result as the following proposition.
\begin{proposition}
\label{prop:bicr_by_isrv}
If we let $U=A^\T$ when $\vc{r}^\SH_0=U\vc{r}_0$ in the BiCG method, then we obtain
the biconjugate residual (BiCR) method \cite{sogabe2009, sogabe2004} %.
\footnote{
A series of product-type methods based on the BiCR have been proposed 
by Sogabe et al. \cite{sogabe2004};
these methods are based on an idea presented in \cite{zhang1997}.
The BiCR method was described in \cite{sogabe2009},
in a discussion of the product-type methods based on it.
Other product-type methods based
on the BiCR have been proposed \cite{abe2010,abe2007};
their derivation is different from that in \cite{sogabe2004},
and
these methods can be implemented more easily than that of \cite{sogabe2004}.
Note that the latter method can only be implemented to multiply $\vc{r}_0$ by $A^\T$ as the ISRV, 
that is, $U=A^\T$.
However,
{\Ref}s \cite{abe2010,abe2007} describe setting the ISRV to $A^\T\vc{r}^\SH_0$,
if $U=A^{-\T}$ at $\vc{r}^\SH_0 = U\vc{r}_0$;
these BiCR-type methods are then reduced to BiCG-type methods
(also see \Rem~{\it \ref{rem:arbitrary_isrv}}).
}.
\end{proposition}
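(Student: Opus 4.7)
The plan is to show that the choice $\vc{r}^\SH_0=A^\T\vc{r}_0$ forces every shadow iterate in \Alg~\ref{alg:pbicg_simple} (without preconditioning, i.e., pure BiCG) to carry an $A^\T$ factor, and that stripping off this factor converts the BiCG recurrences and coefficient formulas into those of the BiCR method.

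First, I would introduce auxiliary sequences $\hat{\vc{r}}_k$ and $\hat{\vc{p}}_k$ through the ansatz $\vc{r}^\SH_k=A^\T\hat{\vc{r}}_k$ and $\vc{p}^\SH_k=A^\T\hat{\vc{p}}_k$, and prove by induction on $k$ that such sequences exist, with $\hat{\vc{r}}_0=\vc{r}_0$ and $\hat{\vc{p}}_0=\vc{p}_0$. The base case is immediate from the ISRV specification together with $\vc{p}^\SH_0=\vc{r}^\SH_0$ and $\vc{p}_0=\vc{r}_0$. For the inductive step, substituting the ansatz into the BiCG shadow recurrences gives
\begin{eqnarray}
A^\T\hat{\vc{p}}_k &=& A^\T\hat{\vc{r}}_k+\beta_{k-1}A^\T\hat{\vc{p}}_{k-1},\NON\\
A^\T\hat{\vc{r}}_{k+1} &=& A^\T\hat{\vc{r}}_k-\alpha_k A^\T(A^\T\hat{\vc{p}}_k).\NON
\end{eqnarray}
Since $A$ is nonsingular (otherwise (\ref{eqn:linear}) is not well posed) $A^\T$ is injective, so these reduce to
\begin{eqnarray}
\hat{\vc{p}}_k &=& \hat{\vc{r}}_k+\beta_{k-1}\hat{\vc{p}}_{k-1},\NON\\
\hat{\vc{r}}_{k+1} &=& \hat{\vc{r}}_k-\alpha_k A^\T\hat{\vc{p}}_k,\NON
\end{eqnarray}
which are exactly the BiCR recurrences for the shadow quantities.

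Next, I would rewrite the BiCG coefficients in terms of the factored sequences by inserting the ansatz into the inner products and moving $A^\T$ across:
\begin{eqnarray}
\alpha_k
&=&\frac{\LA A^\T\hat{\vc{r}}_k,\vc{r}_k\RA}{\LA A^\T\hat{\vc{p}}_k,A\vc{p}_k\RA}
=\frac{\LA \hat{\vc{r}}_k,A\vc{r}_k\RA}{\LA A^\T\hat{\vc{p}}_k,A\vc{p}_k\RA},\NON\\
\beta_k
&=&\frac{\LA A^\T\hat{\vc{r}}_{k+1},\vc{r}_{k+1}\RA}{\LA A^\T\hat{\vc{r}}_k,\vc{r}_k\RA}
=\frac{\LA \hat{\vc{r}}_{k+1},A\vc{r}_{k+1}\RA}{\LA \hat{\vc{r}}_k,A\vc{r}_k\RA}.\NON
\end{eqnarray}
These are precisely the BiCR formulas for $\alpha_k$ and $\beta_k$, the only change from BiCG being that the Euclidean inner product is replaced by an $A$-weighted one on the primal side. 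The primal recurrences for $\vc{x}_k$, $\vc{r}_k$, $\vc{p}_k$ are identical in BiCG and BiCR, so no further adjustment is required. Identifying $\hat{\vc{r}}_k$ and $\hat{\vc{p}}_k$ with the BiCR shadow residual and shadow probing direction vectors then completes the reduction.

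The main obstacle is making the extraction $\vc{r}^\SH_k\mapsto\hat{\vc{r}}_k$ rigorous: the ansatz is only well defined if $A^\T$ is injective on the subspace containing the shadow iterates. This is handled by invoking nonsingularity of $A$, but it should be stated explicitly, because it is the single nontrivial hypothesis that makes the term-by-term translation of recurrences legal. Once that is in place, every other step is a direct substitution, and the bookkeeping of biorthogonality/biconjugacy in \Thm~\ref{thm:pbicg_biortho_biconj} simply turns into the $A$-weighted biorthogonality/biconjugacy that characterises BiCR.
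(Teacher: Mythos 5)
Your argument is correct, but it is worth noting that the paper itself does not prove this proposition at all: it is stated as a known result and supported only by the citations to \cite{sogabe2009, sogabe2004, abe2010, abe2007} and the accompanying footnote. You therefore supply something the paper omits, namely an explicit reduction. Your route --- factoring $\vc{r}^\SH_k = A^\T\hat{\vc{r}}_k$, $\vc{p}^\SH_k = A^\T\hat{\vc{p}}_k$ out of the shadow recurrences and pushing $A^\T$ across the inner products so that $\LA \vc{r}^\SH_k, \vc{r}_k\RA = \LA \hat{\vc{r}}_k, A\vc{r}_k\RA$ and $\LA \vc{p}^\SH_k, A\vc{p}_k\RA = \LA A^\T\hat{\vc{p}}_k, A\vc{p}_k\RA$ --- is exactly the standard derivation in the cited literature, and it buys what the bare citation does not: it makes visible that the Euclidean biorthogonality and biconjugacy of Theorem~1 turn into the $A$-weighted conditions $\LA \hat{\vc{r}}_i, A\vc{r}_j\RA = 0$ and $\LA A^\T\hat{\vc{p}}_i, A\vc{p}_j\RA = 0$ that characterise BiCR, which is precisely the mechanism the surrounding discussion of the ISRV (and Case II of Theorem~3) relies on. It also makes explicit that the resulting BiCR instance has $\hat{\vc{r}}_0 = \vc{r}_0$, consistent with the footnote's remark that this construction forces $U = A^\T$ applied to $\vc{r}_0$. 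One small simplification: the injectivity of $A^\T$ is not actually needed if you define $\hat{\vc{r}}_k$ and $\hat{\vc{p}}_k$ \emph{forward} by the reduced (BiCR) recurrences with $\hat{\vc{r}}_0 = \vc{r}_0$ and then verify by induction that $A^\T\hat{\vc{r}}_k = \vc{r}^\SH_k$ and $A^\T\hat{\vc{p}}_k = \vc{p}^\SH_k$; existence of the factorisation is all the equivalence requires, and uniqueness of the preimage plays no role. Invoking nonsingularity of $A$ is harmless, but it is a convenience rather than ``the single nontrivial hypothesis.''
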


\begin{theorem}
\label{thm:dir_prec_by_isrv}
The direction of a preconditioned system for the BiCG method is switched
by the construction and setting of the ISRV. %,
\end{theorem}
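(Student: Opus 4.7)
The plan is to show that once one starts from Algorithm~\ref{alg:pbicg_simple} (BiCG under a generic two-sided preconditioned system), the only degree of freedom left beyond the mandatory conversions $\tA = \Pinv_L A \Pinv_R$, $\tvc{b} = \Pinv_L\vc{b}$, $\tvc{x}_0 = \PO_R\vc{x}_0$ is the construction of the ISRV $\tvc{r}^\SH_0$. I would trace this free choice through the polynomial recurrences (\ref{eqn:prec_init_pol})--(\ref{eqn:prec_prob_pol}) and show that it alone selects which of the four PBiCG algorithms in \Sec~\ref{sec:pbicg_alg}, and hence which of the three directions defined in \Dfn~\ref{dfn:dir_prec}, is actually being realized in terms of the original, unpreconditioned quantities.

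Concretely, I would proceed in three steps. First, using (\ref{eqn:bicg_conv_tilde_rL})--(\ref{eqn:bicg_conv_tilde_pS}), I would rewrite the biorthogonality pairing $\ip{\tvc{r}^\SH_i,\tvc{r}_j}$ from \Thm~\ref{thm:pbicg_biortho_biconj} purely in terms of $\vc{r}_0$, $\tvc{r}^\SH_0$ and polynomials in $A\Pinv$ and $A^\T\Pinvt$. Second, I would examine the four canonical constructions of the ISRV already recorded at the start of \Sec~\ref{ssec:analysis_of_4pcgs}: setting $\tvc{r}^\SH_0 = \PO_L^\T\vc{r}_0$ collapses the pairing to $\ip{\vc{r}^\flat_k,\vc{r}_k}$ and recovers Algorithm~\ref{alg:pbicg_corresp_conv_pcgs} (right system), while $\tvc{r}^\SH_0 = \Pinvt_R(\Pinv\vc{r}_0)$ recovers Algorithm~\ref{alg:pbicg_corresp_Llpcgs} (left system); likewise $\tvc{r}^\SH_0 = \Pinvt_R\vc{r}^\SH_0$ with $\vc{r}^\SH_0 = \Pinv\vc{r}_0$ produces the standard and Improved2 algorithms. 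In each case the coefficient matrix of the associated Krylov subspace --- $A\Pinv$ versus $\Pinv A$ --- and the solution subspace containment $\vc{x}_k\in\vc{x}_0+\mathcal{K}^\rmR_k$ versus $\vc{x}_k\in\vc{x}_0+\mathcal{K}^\rmL_k$ are then fixed by \Prop~\ref{prop:pbicg_biortho_biconj_alpha_beta}, since the forms of $\alpha_k$ and $\beta_k$ are determined entirely by the ISRV-dependent inner product. Third, I would invoke \Prop~\ref{prop:congruency} to rule out that these four outcomes are just cosmetic rewritings of the same algorithm: congruency shows that each direction in \Dfn~\ref{dfn:dir_prec} admits the same family of PBiCG realisations, so different ISRV constructions genuinely commit one to different directions, not merely different presentations.

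The main obstacle I expect is step two --- cleanly showing that a single choice of $\tvc{r}^\SH_0$ simultaneously (i) pins down the polynomial $\pR^{\rm dir}_k$ (including the superscript \rmR{} vs.\ \rmL) appearing in both $\tvc{r}_k$ and $\tvc{r}^\SH_k$ and (ii) forces the Krylov subspace containing $\vc{x}_k$. The issue is that the two sides of the inner product are tied together through $\alpha_k,\beta_k$, so one must verify that a change of ISRV cannot be absorbed by a similarity transform on the linear side alone. I plan to handle this by splitting the argument along the lines of \Prop~\ref{prop:pbicg_biortho} (both the matrix/vector viewpoint and the polynomial viewpoint), which already establishes that exactly one instance of $\Pinv$ or $\Pinvt$ appears in the biorthogonality pairing and that its placement is dictated by how the ISRV was constructed; the theorem then follows because that placement is precisely what distinguishes the right, left, and two-sided directions in \Dfn~\ref{dfn:dir_prec}.
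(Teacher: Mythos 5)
Your proposal follows essentially the same route as the paper's proof: both trace the canonical ISRV constructions ($\vc{r}^\SH_0=\Pinv\vc{r}_0$, $\vc{r}^\SH_0=\PO^\T\vc{r}_0$, $\vc{r}^\SH_0=\PO^\T_R\Pinv_L\vc{r}_0$) through the polynomial form of the biorthogonality pairing and read off the direction from the resulting coefficient matrices and the shared initial vector, exactly as in Case~I of the paper's argument. The only substantive differences are minor: the paper does not need your appeal to Proposition~\ref{prop:congruency}, and it additionally records a Case~II with an arbitrary operator $\tU$ in the construction $\tvc{r}^\SH_0=\tU\tvc{r}_0$ (used later in Remark~{\it \ref{rem:arbitrary_isrv}}), which your plan omits but which is not needed for the switching claim itself.
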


{\bf Proof }
It is sufficient to prove the following cases regarding the biorthogonality.
The biconjugacy can be proven in a similar manner.

Case I: 
If $\L(\tvc{r}^\SH_0, \tvc{r}_0 \R) \neq 0$, then $\tvc{r}^\SH_0=\tvc{r}_0$.

We mention the following special case for future reference.

Case II:
If $\L(\tvc{r}^\SH_0, \tU \tvc{r}_0 \R) \neq 0$, then $\tvc{r}^\SH_0=\tU \tvc{r}_0$,
$\SP\SP$
($\tU$ : preconditioned system of $U$).

\noindent
I.~ The case of $\tU=I$, such that 
$\L(\tvc{r}^\SH_0, \tU\tvc{r}_0 \R) = \L(\tvc{r}^\SH_0, \tvc{r}_0 \R) \neq 0$ :

With the equation $\tvc{r}^\SH_0 = \tvc{r}_0$,
we may construct the following three items.
Each item has two verifications,
the first one directly applies the ISRV to the polynomials
of the preconditioned system,
the second applies the ISRV to the polynomials
of the standard PBiCG, which have the same form in all items.
The double-underlined equations show the construction of the ISRV that is
specialized for switching the direction of a preconditioned system;
when right-hand side is given (we term ``setting''),
the direction is fixed.

\begin{enumerate}
\renewcommand{\labelenumi}{\arabic{enumi})}
\item The left-preconditioned system
(ISRV1: \UL{\UL{$\vc{r}^\SH_0=\Pinv\vc{r}_0$}}):

If $\tvc{r}^\SH_0 = \Pinvt_R\vc{r}^\SH_0, \SP \tvc{r}_0 = \Pinv_L\vc{r}_0$,

 $\L( \tvc{r}^\SH_0, \tvc{r}_0 \R)
= \L( \Pinvt_R\vc{r}^\SH_0, \Pinv_L\vc{r}_0 \R) 
= \L( \vc{r}^\SH_0, \Pinv\vc{r}_0 \R) \neq 0$,
then
  $\vc{r}^\SH_0 = \Pinv\vc{r}_0 $.

This is equivalent to
  $\tvc{r}^\SH_0 = \vc{r}^\SH_0, \SP \tvc{r}_0 = \Pinv\vc{r}_0$.

$\L( \tvc{r}^\SH_k, \tvc{r}_k \R)
= \L( \pR_k(\tA^\T)\tvc{r}^\SH_0, \pR_k(\tA)\tvc{r}_0 \R)
= \L( \pR_k(A^\T\Pinvt)\vc{r}^\SH_0, \pR_k(\Pinv A)(\Pinv\vc{r}_0) \R)$

$= \L( \pR^\rmL_k(A^\T\Pinvt)(\Pinv\vc{r}_0),
 \pR^\rmL_k(\Pinv A)(\Pinv\vc{r}_0) \R)$.

In the standard PBiCG with \UL{\UL{$\vc{r}^\SH_0=\Pinv\vc{r}_0$}}
that constructs the left system,

$\L( \tvc{r}^\SH_k, \tvc{r}_k \R)
= \L( \pR_k(A^\T\Pinvt)\SP\UL{\UL{\vc{r}^\SH_0}},\SP \Pinv\pR_k(A\Pinv)\vc{r}_0 \R)$

$= \L( \pR^\rmL_k(A^\T\Pinvt)(\UL{\UL{\Pinv\vc{r}_0}}),
 \pR^\rmL_k(\Pinv A)(\Pinv\vc{r}_0) \R)$.

\item The right-preconditioned system
(ISRV2: \UL{\UL{$\vc{r}^\SH_0=\PO^\T\vc{r}_0$}}):

If $\tvc{r}^\SH_0 = \Pinvt_R\vc{r}^\SH_0,\SP \tvc{r}_0 = \Pinv_L\vc{r}_0$,

 $\L( \tvc{r}^\SH_0, \tvc{r}_0 \R)
= \L( \Pinvt_R\vc{r}^\SH_0, \Pinv_L\vc{r}_0 \R)
= \L( \Pinvt\vc{r}^\SH_0, \vc{r}_0 \R) \neq 0$,
then
  $\Pinvt\vc{r}^\SH_0 \equiv \vc{r}^\flat_0 = \vc{r}_0$
or
  $\vc{r}^\SH_0 = \PO^\T\vc{r}_0$.

This is equivalent to 
  $\tvc{r}^\SH_0 = \Pinvt\vc{r}^\SH_0 \equiv \vc{r}^\flat_0,\SP \tvc{r}_0 = \vc{r}_0$
.

$\L( \tvc{r}^\SH_k, \tvc{r}_k \R)
= \L( \pR_k(\tA^\T)\tvc{r}^\SH_0, \pR_k(\tA)\tvc{r}_0 \R)
= \L( \pR_k(\Pinvt A^\T)\Pinvt\vc{r}^\SH_0, \pR_k(A\Pinv)\vc{r}_0 \R)$

$ \equiv
  \L( \pR_k(\Pinvt A^\T)\vc{r}^\flat_0, \pR_k(A\Pinv)\vc{r}_0 \R)$
$= \L( \pR^\rmR_k(\Pinvt A^\T)\vc{r}_0, \pR^\rmR_k(A\Pinv)\vc{r}_0 \R)$.

In the standard PBiCG with \UL{\UL{$\vc{r}^\SH_0=\PO^\T\vc{r}_0$}}
that constructs the right system,

$\L( \tvc{r}^\SH_k, \tvc{r}_k \R)
= \L( \pR_k(A^\T\Pinvt)\SP\UL{\UL{\vc{r}^\SH_0}},\SP
           \Pinv\pR_k(A\Pinv)\vc{r}_0 \R)$

$= \L( \Pinvt\pR^\rmR_k(A^\T\Pinvt)(\UL{\UL{\PO^\T\vc{r}_0}}),
           \pR^\rmR_k(A\Pinv)\vc{r}_0 \R)$
           
$= \L( \pR^\rmR_k(\Pinvt A^\T)\vc{r}_0,
           \pR^\rmR_k(A\Pinv)\vc{r}_0 \R)$.

\item The two-sided preconditioned system
 (ISRV3: \UL{\UL{$\vc{r}^\SH_0=\PO^\T_R\Pinv_L\vc{r}_0$}}):

If $\tvc{r}^\SH_0 = \Pinvt_R\vc{r}^\SH_0,\SP \tvc{r}_0 = \Pinv_L\vc{r}_0$,

 $\L( \tvc{r}^\SH_0, \tvc{r}_0 \R)
= \L( \Pinvt_R\vc{r}^\SH_0, \Pinv_L\vc{r}_0 \R) \neq 0$,
then
  $\Pinvt_R\vc{r}^\SH_0 = \Pinv_L\vc{r}_0$
or
  $\vc{r}^\SH_0 = \PO^\T_R\Pinv_L\vc{r}_0$.

This is obviously equivalent to 
  $\tvc{r}^\SH_0 = \Pinvt_R\vc{r}^\SH_0,\SP \tvc{r}_0 = \Pinv_L\vc{r}_0$.

$\L( \tvc{r}^\SH_k, \tvc{r}_k \R)
= \L( \pR_k(\tA^\T)\tvc{r}^\SH_0, \pR_k(\tA)\tvc{r}_0 \R)$

$= \L( \pR_k(\Pinvt_R A^\T\Pinvt_L )(\Pinvt_R\vc{r}^\SH_0),
                     \pR_k(\Pinv_L A\Pinv_R)(\Pinv_L\vc{r}_0) \R)$

$= \L( \pR^\rmW_k(\Pinvt_R A^\T\Pinvt_L )(\Pinv_L\vc{r}_0),
                     \pR^\rmW_k(\Pinv_L A\Pinv_R)(\Pinv_L\vc{r}_0) \R)$.

In the standard PBiCG with \UL{\UL{$\vc{r}^\SH_0 = \PO^\T_R\Pinv_L\vc{r}_0$}}
that constructs the two-sided system,

$\L( \tvc{r}^\SH_k, \tvc{r}_k \R)
= \L( \pR_k(A^\T\Pinvt)\SP\UL{\UL{\vc{r}^\SH_0}},\SP
                     \Pinv\pR_k(A\Pinv)\vc{r}_0 \R)$

$= \L( \pR^\rmW_k(A^\T\Pinvt)(\UL{\UL{\PO^\T_R\Pinv_L\vc{r}_0}}),\SP
                     \Pinv_R\Pinv_L\pR^\rmW_k(A\Pinv)\vc{r}_0 \R)$

$= \L( \PO^\T_R\pR^\rmW_k(\Pinvt_R A^\T\Pinvt_L)(\Pinv_L\vc{r}_0),\SP
                     \Pinv_R\pR^\rmW_k(\Pinv_LA\Pinv_R)(\Pinv_L\vc{r}_0) \R)$

$= \L( \pR^\rmW_k(\Pinvt_R A^\T\Pinvt_L)(\Pinv_L\vc{r}_0),\SP
                     \pR^\rmW_k(\Pinv_LA\Pinv_R)(\Pinv_L\vc{r}_0) \R)$.
\end{enumerate}

\noindent
II.~The case of an arbitrary $\tU$,
 such that $\L(\tvc{r}^\SH_0, \tU \tvc{r}_0 \R) \neq 0$ :

\begin{itemize}
\item[] If $\tvc{r}^\SH_0 = \Pinvt_R\vc{r}^\SH_0, \tvc{r}_0 = \Pinv_L\vc{r}_0$,

 $\L( \tvc{r}^\SH_0, \SP \tU \tvc{r}_0 \R)
= \L( \Pinvt_R\vc{r}^\SH_0, \SP \tU \Pinv_L\vc{r}_0 \R) \neq 0$,

then
  $\Pinvt_R\vc{r}^\SH_0 = \tU \Pinv_L\vc{r}_0$
or
  $\vc{r}^\SH_0 = \PO^\T_R \tU \Pinv_L\vc{r}_0$.
  
Here,
if $\tU = I$,
   then the two-sided system is constructed,
   because $\UL{\UL{\vc{r}^\SH_0 =}}$ $\UL{\UL{\PO^\T_R\Pinv_L\vc{r}_0}}$ (ISRV3);
if $\tU = \Pinvt_R\Pinv_R$,
   then the left system is constructed,
   because $\UL{\UL{\vc{r}^\SH_0=\Pinv\vc{r}_0}}$ (ISRV1);
and
if $\tU = \Pinvt_R\PO^\T\PO_L = \PO^\T_L\PO_L$,
   then the right system is constructed,
   because  $\UL{\UL{\vc{r}^\SH_0=\PO^\T\vc{r}_0}}$ (ISRV2).
   \qquad \endproof
\end{itemize}

In the next section,
\Thm~\ref{thm:dir_prec_by_isrv} will be verified numerically.

Here,
we note the following remarks; further information can be found in 
\App~\ref{appsec:bahavior_pol_pbicg}.

\begin{remark}
\label{rem:dir_ksp_and_isrv}
For any items for Case I, in the final structure,
the coefficient matrix in the residual polynomial
is the same as the direction of the preconditioned system;
further,
the initial residual vector of the linear system
is the same as
that of the shadow system; 
that is,
 $\tvc{r}^\SH_0=\tvc{r}_0$.

Specifically,
in the left system (ISRV1),
the final structure is
\begin{eqnarray}
&&
\L( \pR^\rmL_k(A^\T\Pinvt)(\Pinv\vc{r}_0),
 \pR^\rmL_k(\Pinv A)(\Pinv\vc{r}_0) \R); \NON
\end{eqnarray}
in the right system (ISRV2),
the final structure is
\begin{eqnarray}
&&
\L( \pR^\rmR_k(\Pinvt A^\T)\vc{r}_0,
 \pR^\rmR_k(A\Pinv)\vc{r}_0 \R); \NON
\end{eqnarray}
and in the two-sided system (ISRV3),
the final structure is
\begin{eqnarray}
&&
\L( \pR^\rmW_k(\Pinvt_R A^\T\Pinvt_L )(\Pinv_L\vc{r}_0),
 \pR^\rmW_k(\Pinv_L A\Pinv_R)(\Pinv_L\vc{r}_0) \R). \NON
\end{eqnarray}

\noindent
Note that here 
\Prop~\ref{prop:pbicg_biortho} is satisfied,
and
\Rem~{\it \ref{rem:std_pbicg_prec_oper}} (section ~\ref{ssec:improved1_pbicg}) and
\Rem~{\it \ref{rem:impr2_pbicg_prec_oper}} (section ~\ref{ssec:improved2_pbicg})
become apparent.
\end{remark}

\begin{remark}
\label{rem:arbitrary_isrv}
From \Prop~\ref{prop:bicr_by_isrv}
 and Case II in the proof of \Thm~\ref{thm:dir_prec_by_isrv},
if either $U$ or $\tU$ is arbitrarily chosen,
then the appropriate method for solving and 
the direction of the preconditioned system
may be indeterminable.
Even if $\vc{r}_0$ is replaced by an arbitrary vector $\vc{s}$,
then Case II
is still proven without loss of generality,
because $\vc{s} = U\vc{r}_0$.
Conversely,
if $U$ or $\tU$ is defined adequately,
as in Case I for the PBiCG,
then the appropriate solving method and the direction of the preconditioned system
can be determined.
\end{remark}

\begin{remark}
\label{rem:isrv_pbicg_for_sym}
As mentioned in section~\ref{ssec:improved1_pbicg},
there are instances in which
$(\vc{r}^\SH_0, \vc{r}_0)$ $\neq 0$ (e.g., $\vc{r}^\SH_0=\vc{r}_0$)
at the initial part of the standard PBiCG.
However,
in this case, $\tA$ in \Alg~\ref{alg:pbicg_simple} must be SPD,
with the modification
$(\tvc{r}^\SH_0, \tvc{r}_0) \neq 0$ (e.g., $\vc{r}^\SH_0=\vc{r}_0$).
The reason for this is as follows.
Let $A$ be SPD with the preconditioner $\PO=CC^{\rm T}$ ($\PO\approx A$),
then the two-sided preconditioning requires $\tA=C^{-1}AC^{-\T}$
in order to ensure it is still SPD,
and ISRV3 is constructed as $\vc{r}^\SH_0=CC^{-1}\vc{r}_0=\vc{r}_0$.
\end{remark}

\begin{remark}
\label{rem:dfn_of_prec_biLanczos}
The definition of the direction of a preconditioned system
for the BiCG method
requires \Thm~\ref{thm:dir_prec_by_isrv}
in addition to \Dfn~\ref{dfn:dir_prec}.
\end{remark}

%%%%%%  section  %%%%%%%%%%%%%%%%%%%%%%%%%%%%%%%%%%%%%%%%%%%%%%%%%%%%%%%%%%%%%%%%%
\section{Numerical experiments}
\label{sec:numerical_experiments}

In section~\ref{ssec:num_exp1},
by comparing the value of $\alpha_k$ and $\beta_k$ for each of
the four PBiCG algorithms presented in section~\ref{sec:pbicg_alg}
and
their corresponding PCGS algorithms \cite{itoh2019a},
we verify that the behavior of the right system is different from that of the
other preconditioned systems
(i.e., the left-preconditioned algorithms
and the improved preconditioned algorithms).
Next,
the switching of the direction of the preconditioned system
by the construction and setting of the ISRV
(\Thm~\ref{thm:dir_prec_by_isrv})
is verified in section~\ref{ssec:num_exp2}.

\subsection{Behavior of $\alpha_k$ and $\beta_k$ in the four PBiCG methods and
their corresponding PCGS methods}
\label{ssec:num_exp1}

The test problems were generated
by using real nonsymmetric matrices obtained from
 the Matrix Market \cite{matrixmarket}({\tt sherman4} and {\tt watt\UB \UB 1}).
The RHS vector $\vc{b}$ of (\ref{eqn:linear})
was generated by setting all elements of the exact solution vector
$\vc{x}_{\rm exact}$
to 1.0.
The initial solution was $\vc{x}_0 = \vc{0}$.

The numerical experiments were executed on a Dell Precision T7400
(Intel Xeon E5420, 2.5 GHz CPU, 16 GB RAM) running
the Cent OS (kernel 2.6.18)
and Matlab 7.8.0 (R2009a).

In all tests,
ILU(0) was adopted as the preconditioning operation,
and the value ``zero'' was set to mean the {\it fill-in} level.
The ISRVs were 
$\vc{r}^\flat_0 = \vc{r}_0$
 in the PBiCG corresponding to the conventional PCGS
 (\Alg~\ref{alg:pbicg_corresp_conv_pcgs})
and the conventional PCGS, they were 
$\vc{r}^\SH_0 = \vc{r}^+_0$
 in the left-PBiCG (\Alg~\ref{alg:pbicg_corresp_Llpcgs})
and the left-PCGS,
and they were
$\vc{r}^\SH_0 = \Pinv\vc{r}_0$
 in the standard PBiCG (\Alg~\ref{alg:pbicg}),
 the  PBiCG corresponding to Improved2 (\Alg~\ref{alg:improved2_pbicg}),
Improved1 (PCGS), and Improved2 (PCGS).

We plotted the values of $\alpha_k$ and $\beta_k$
for each of the four PBiCG algorithms presented in section~\ref{sec:pbicg_alg}
and for each of their corresponding PCGS algorithms \cite{itoh2019a};
these are shown in Figures \ref{fig:sherman4-alp1} to \ref{fig:watt__1-bet2}.

%%%%%%%%%%%%%%%%%%%%%%%%%%%%%%%%%%%%%%%%%%%%%%%%%%%%%%%%%%%%%%%%%%%%%%%%%

\def\RATIO1{0.4}
\def\CASE{sherman4}
\begin{figure}[htbp]
\begin{center}
\resizebox*{\FIGSIZE\columnwidth}{\RATIO1\height}{
\includegraphics*{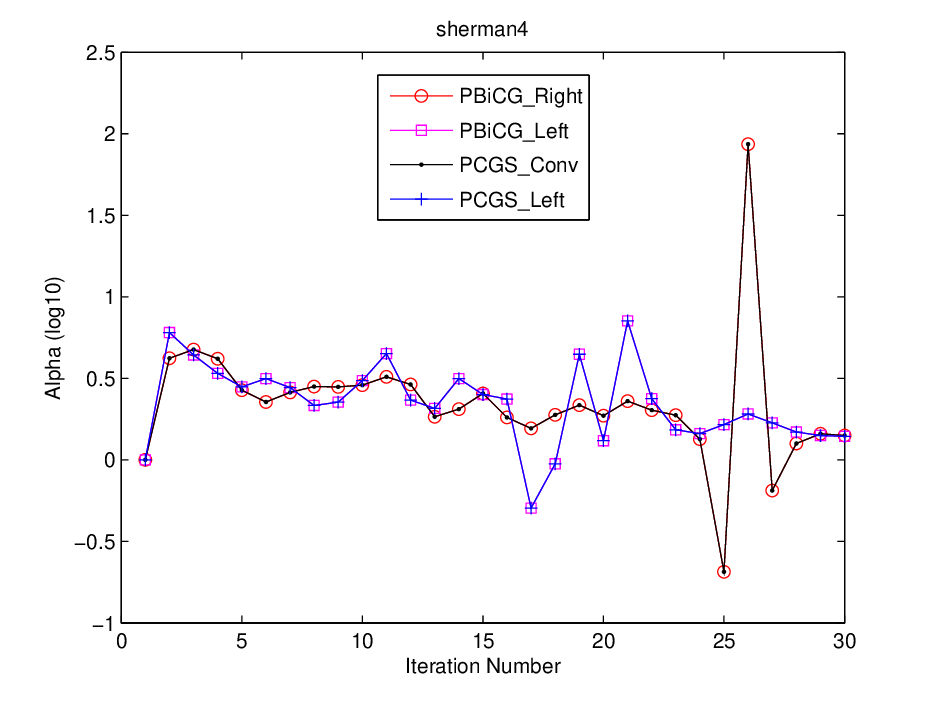}}
\caption{Values of $\alpha_k$ for the right- and left-PBiCG,
 and those of the corresponding PCGS methods
 (\CASE).}
\label{fig:sherman4-alp1}
\end{center}

\begin{center}
\resizebox*{\FIGSIZE\columnwidth}{\RATIO1\height}{
\includegraphics*{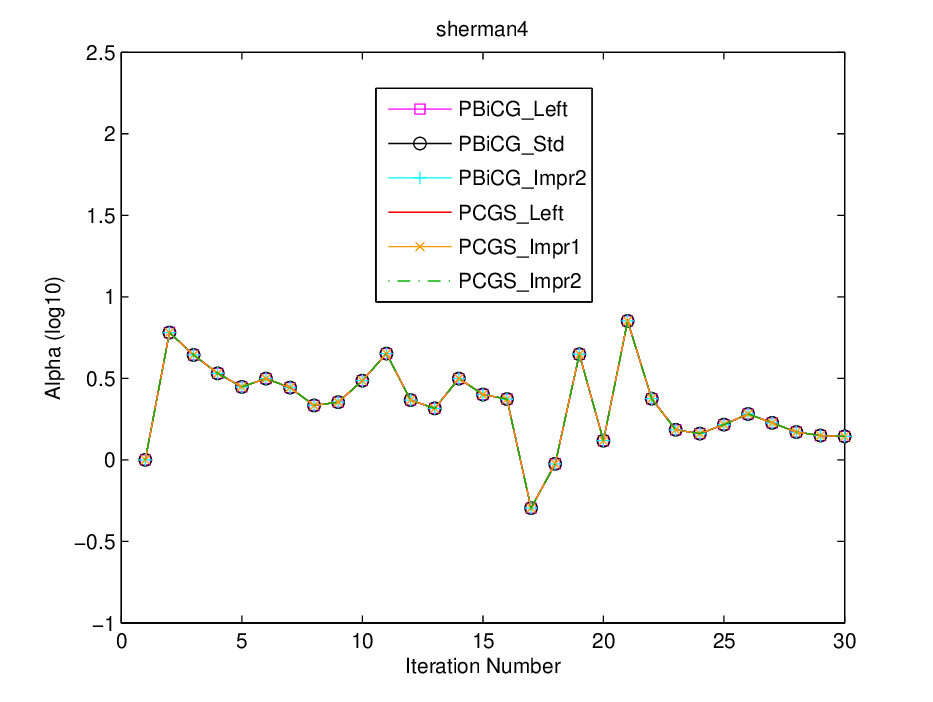}}
\caption{Value of $\alpha_k$ for the left- and standard PBiCG and the Improved2 PBiCG,
 and that of their corresponding PCGS methods
 (\CASE).}
\label{fig:sherman4-alp2}
\end{center}

\begin{center}
\resizebox*{\FIGSIZE\columnwidth}{\RATIO1\height}{
\includegraphics*{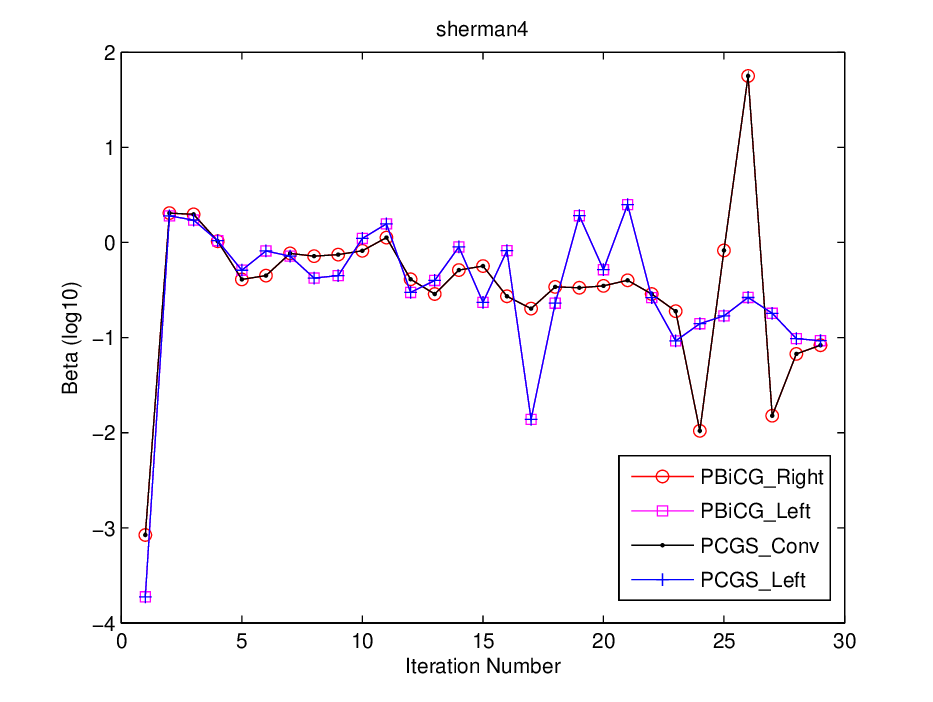}}
\caption{Value of $\beta_k$ for the right- and left-PBiCG,
 and that of the corresponding PCGS method
 (\CASE).}
\label{fig:sherman4-bet1}
\end{center}

\begin{center}
\resizebox*{\FIGSIZE\columnwidth}{\RATIO1\height}{
\includegraphics*{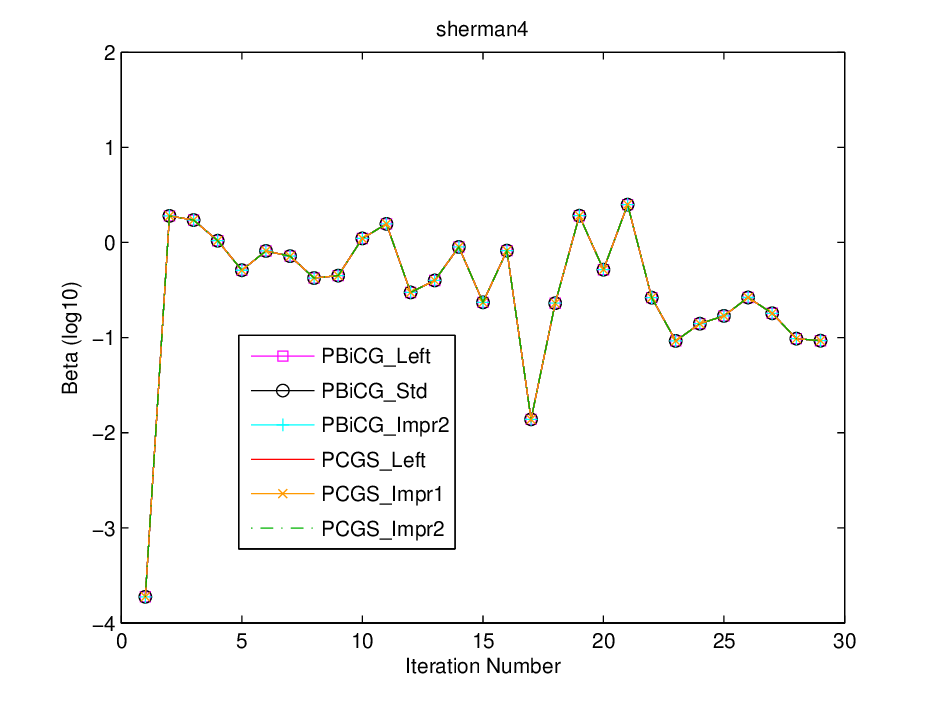}}
\caption{Value of $\beta_k$ for the left- and standard PBiCG and the Improved2 PBiCG,
 and that of the corresponding PCGS methods
 (\CASE).}
\label{fig:sherman4-bet2}
\end{center}
\end{figure}

\def\CASE{watt\UB \UB 1}
\begin{figure}[htbp]
\begin{center}
\resizebox*{\FIGSIZE\columnwidth}{\RATIO1\height}{
\includegraphics*{./\GRAPHAlpBet/\GRAPHDIR/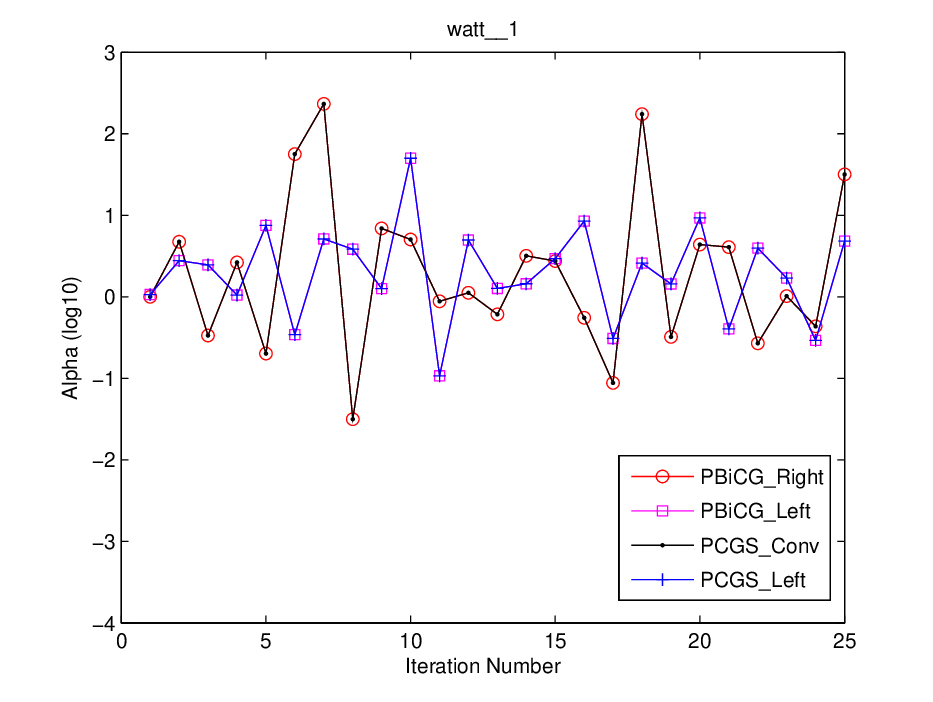}}
\caption{Value of $\alpha_k$ for the right- and left-PBiCG,
 and that of the corresponding PCGS method
 (\CASE).}
\label{fig:watt__1-alp1}
\end{center}

\begin{center}
\resizebox*{\FIGSIZE\columnwidth}{\RATIO1\height}{
\includegraphics*{./\GRAPHAlpBet/\GRAPHDIR/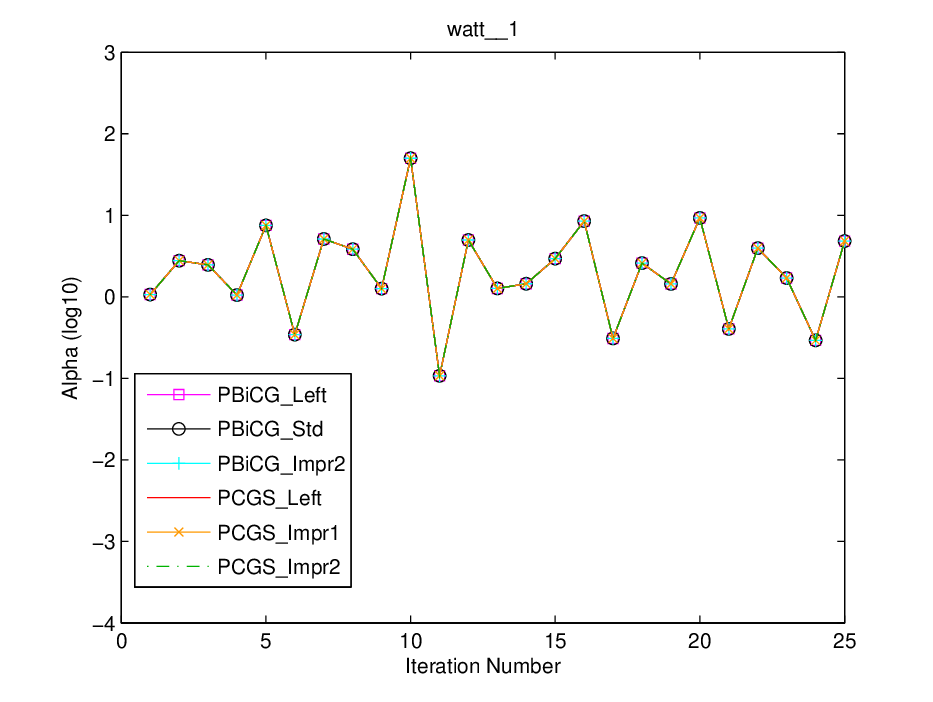}}
\caption{Value of $\alpha_k$ for the left- and standard PBiCG and the Improved2 PBiCG,
 and that of the corresponding PCGS method
 (\CASE).}
\label{fig:watt__1-alp2}
\end{center}

\begin{center}
\resizebox*{\FIGSIZE\columnwidth}{\RATIO1\height}{
\includegraphics*{./\GRAPHAlpBet/\GRAPHDIR/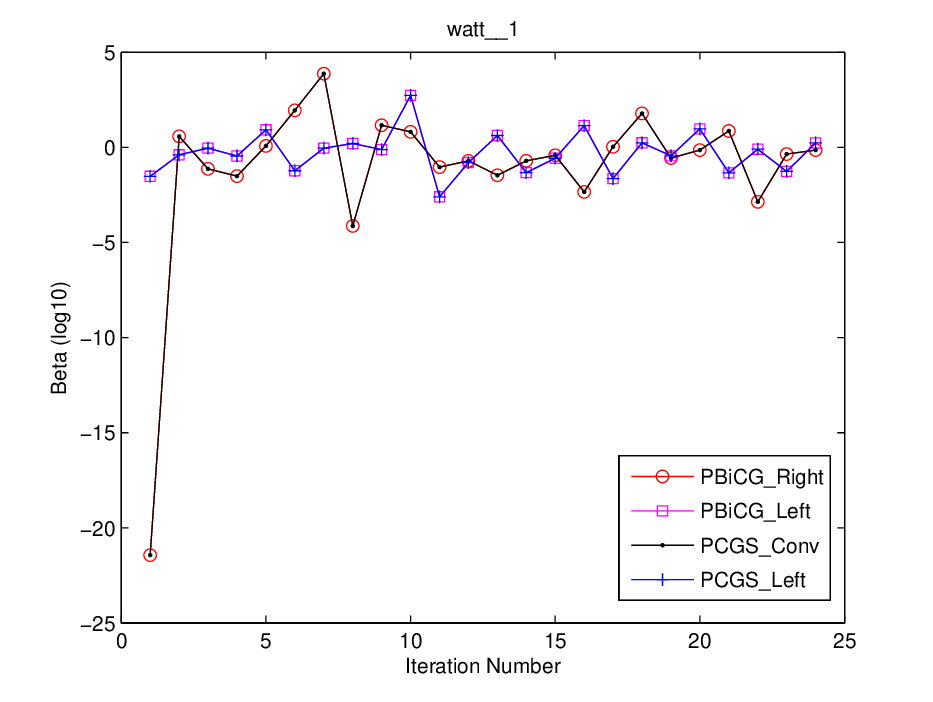}}
\caption{Value of $\beta_k$ for the right- and left-PBiCG,
 and that of the corresponding PCGS method
 (\CASE).}
\label{fig:watt__1-bet1}
\end{center}

\begin{center}
\resizebox*{\FIGSIZE\columnwidth}{\RATIO1\height}{
\includegraphics*{./\GRAPHAlpBet/\GRAPHDIR/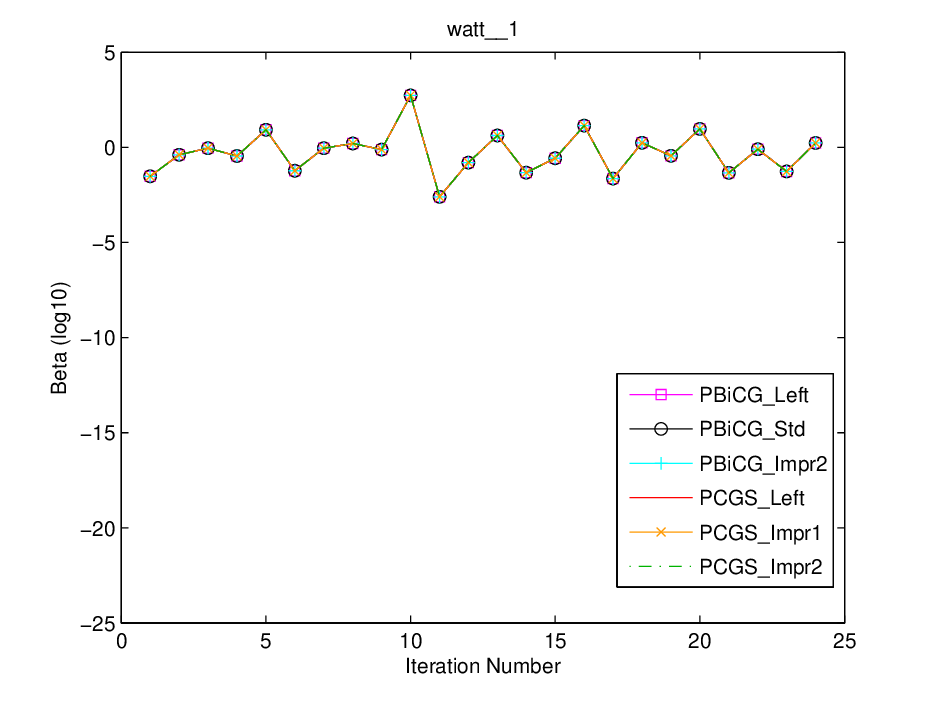}}
\caption{Value of $\beta_k$ for the left- and standard PBiCG and the Improved2 PBiCG,
 and that of the corresponding PCGS method
 (\CASE).}
\label{fig:watt__1-bet2}
\end{center}
\end{figure}
%

%%%%%%%%%%%%%%%%%%%%%%%%%%%%%%%%%%%%%%%%%%%%%%%%%%%%%%%%%%%%%%%%%%%%%%%%%

The labels in the graphs are as follows:

PBiCG\UB Right (\Alg~\ref{alg:pbicg_corresp_conv_pcgs})
 means the PBiCG corresponding to the conventional PCGS,
 that is, the right-preconditioned system.

PBiCG\UB Left (\Alg~\ref{alg:pbicg_corresp_Llpcgs})
 means the PBiCG of the left-preconditioned system.

PBiCG\UB Std (\Alg~\ref{alg:pbicg})
 means the PBiCG of the standard preconditioned BiCG,
 that is, the
PBiCG corresponding to Improved1.

PBiCG\UB Impr2 (\Alg~\ref{alg:improved2_pbicg})
 means the PBiCG corresponding to the Improved2 PCGS.

PCGS\UB Conv means the PCGS of the conventional preconditioning conversion.

PCGS\UB Left means the PCGS of the left-preconditioned system.

PCGS\UB Impr1 means the PCGS of Improved1.

PCGS\UB Impr2 means the PCGS of Improved2.
%

%%%%%%%%%%%%%%%%%%%%%%%%%%%%%%%%%%%%%%%%%%%%%%%%%%%%%%%%%%%%%%%%%%%%%%%%%

Figures ~\ref{fig:sherman4-alp1} and \ref{fig:watt__1-alp1} show
the behavior of $\alpha_k$ for the right-PBiCG
and the left-PBiCG and their corresponding PCGS algorithms.
Figures~\ref{fig:sherman4-alp2} and \ref{fig:watt__1-alp2} show
the behavior of $\alpha_k$ for the left-PBiCG, the standard PBiCG,
the Improved2 PBiCG (the PBiCG corresponding to the Improved2 PCGS),
and the corresponding PCGS algorithms.
From these results,
we know that for each of the four PBiCGs,
the value of $\alpha_k$ is the same as that in their respective PCGS,
but
the values for the right-PBiCG and for the conventional PCGS
are different from the others.
A comparison of these results on $\beta_k$ can be seen in Figures
 \ref{fig:sherman4-bet1}, \ref{fig:sherman4-bet2}, \ref{fig:watt__1-bet1},
and
\ref{fig:watt__1-bet2}.

%%%%%%%%%%%%%%%%%%%%%%%%%%%%%%%%%%%%%%%%%%%%%%%%%%%%%%%%%%%%%%%%%%%%%%%%%

In these graphs,
the behaviors of $\alpha_k$ and $\beta_k$ are the same
for each PBiCG algorithm and its corresponding PCGS algorithm;
that is,
we numerically verified the correspondence
between
 the PBiCG algorithms in \Fig~\ref{fig:PBiCG_ConvLlprecImproved}
 in section \ref{sec:pbicg_alg}
and
 the PCGS algorithms in \Fig~\ref{fig:PCGS_ConvLlprecImproved}
 (also see \cite{itoh2019a}).
We also verified that
the standard PBiCG (\Alg~\ref{alg:pbicg}) is coordinative to
the left-PBiCG (\Alg~\ref{alg:pbicg_corresp_Llpcgs});
that is,
$\alpha_k$ and $\beta_k$ are equivalent,
although
 the residual vector is not
($\vc{r}^+_k \equiv \Pinv\vc{r}_k$, where
$\vc{r}_k$ is the standard PBiCG, and $\vc{r}^+_k$ is the left-PBiCG).
We also verified the difference
between the right-preconditioned system
and the left-preconditioned system, including the standard PBiCG,
because
the behavior of $\alpha_k$ and $\beta_k$ in
the conventional PCGS and its corresponding PBiCG
are different from the behaviors seen in the other algorithms.

\subsection{Behavior of the left-, right-, and two-sided PBiCG
and standard PBiCG when switched by the ISRV}
\label{ssec:num_exp2}

For the experiments described in this subsection,
the experimental environment was the same as that described
 in section~\ref{ssec:num_exp1},
but the ISRVs of the PBiCG method were different.

We will verify \Thm~\ref{thm:dir_prec_by_isrv} by using the
BiCG for the preconditioned system (\Alg~\ref{alg:pbicg_simple})
 and
the standard PBiCG (\Alg~\ref{alg:pbicg}) with three different ISRVs.
Here,
\Alg~\ref{alg:pbicg_simple} is 
based on \Dfn~\ref{dfn:dir_prec},
and
\Alg~\ref{alg:pbicg_simple} is used to construct the left-preconditioned system
with $\PO_L = \PO$ and $\PO_R = I$ (PrecDirl-BiCG);
it is used to construct the right-preconditioned system
with $\PO_L = I$ and $\PO_R = \PO$ (PrecDirr-BiCG);
and
it is used to construct the two-sided preconditioned system
(PrecDirw-BiCG),
for the above algorithms;
the ISRV was uniformly set to $\tvc{r}^\SH_0 = \tvc{r}_0$.
The algorithm relative residual 2-norm was adjusted as following:
$\|\PO\tvc{r}_{k+1}\|_2/\|\vc{b}\|_2$ for the left system,
$\|\tvc{r}_{k+1}\|_2/\|\vc{b}\|_2$ for the right system,
and
$\|\PO_L\tvc{r}_{k+1}\|_2/\|\vc{b}\|_2$ for the two-sided system.
On the other hand,
as shown in \Thm~\ref{thm:dir_prec_by_isrv},
\Alg~\ref{alg:pbicg} was used to construct
the left-preconditioned system
with $\vc{r}^\SH_0 = \Pinv\vc{r}_0$ (ISRV1-PBiCG),
the right-preconditioned system
with $\vc{r}^\SH_0 = \PO^\T\vc{r}_0$ (ISRV2-PBiCG),
and
the two-sided preconditioned system
with $\vc{r}^\SH_0 = \PO^\T_R\Pinv_L\vc{r}_0$ (ISRV3-PBiCG),
these algorithm relative residual 2-norm were
all $\|\vc{r}_{k+1}\|_2/\|\vc{b}\|_2$ .

\def\RATIOTWO{0.4}
\def\CASE{sherman4}
\begin{figure}[ht]
\begin{center}
\resizebox*{\FIGSIZE\columnwidth}{\RATIOTWO\height}{
\includegraphics*{./\GRAPHisrv/\GRAPHDIRTWO/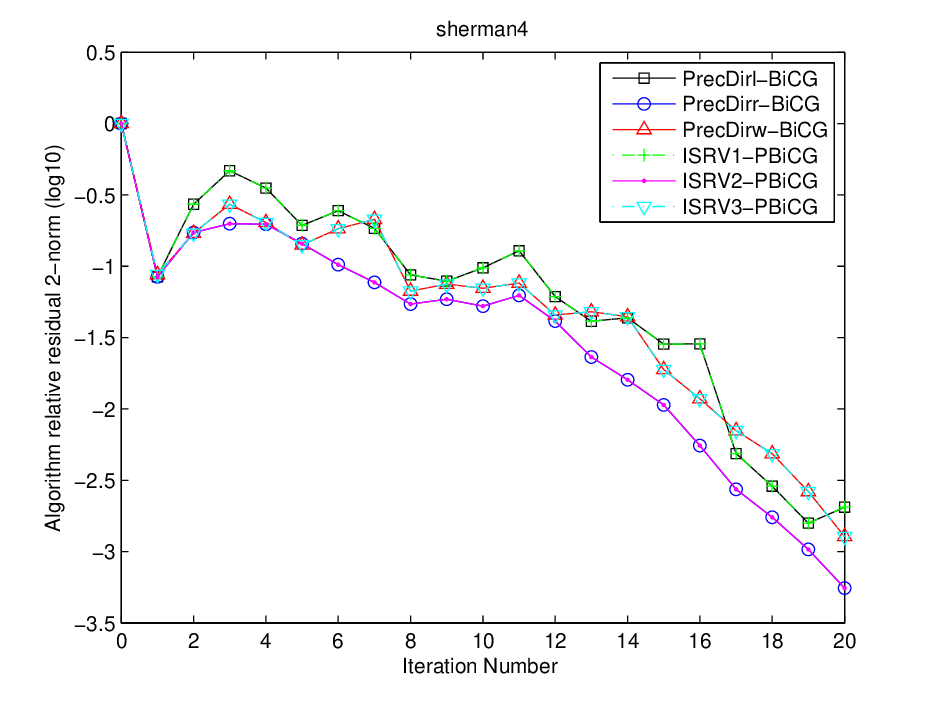}}
\caption{Behavior of the algorithm relative residual 2-norm for
 the left-, right-, and two-sided PBiCG and the
 standard PBiCG, with three different settings for the ISRV
 (\CASE).}
\label{fig:sherman4-isrv}
\end{center}

\def\CASE{watt\UB \UB 1}
\begin{center}
\resizebox*{\FIGSIZE\columnwidth}{\RATIOTWO\height}{
\includegraphics*{./\GRAPHisrv/\GRAPHDIRTWO/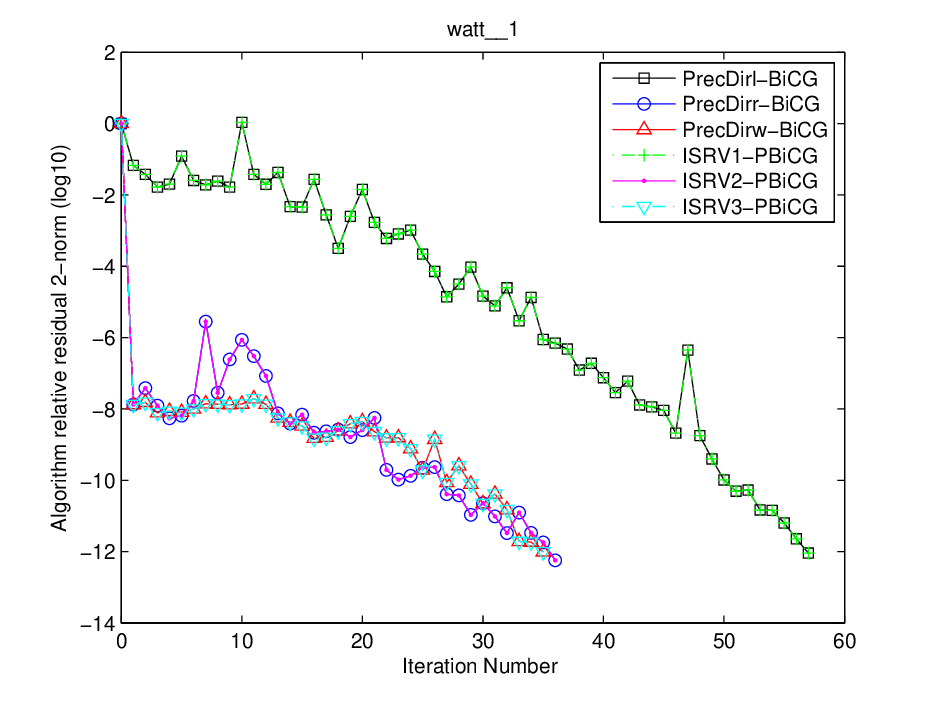}}
\caption{Behavior of the algorithm relative residual 2-norm for
 the left-, right-, and two-sided PBiCG and the
 standard PBiCG, with three different settings for the ISRV
 (\CASE).}
\label{fig:watt__1-isrv}
\end{center}
\end{figure}

Figures \ref{fig:sherman4-isrv} and \ref{fig:watt__1-isrv} illustrate
the equivalence of the direction of a preconditioned system obtained
by \Alg~\ref{alg:pbicg_simple} based on \Dfn~\ref{dfn:dir_prec}
 and
the direction switching due to the ISRV when using \Alg~\ref{alg:pbicg}; this occurs
because
 the left-preconditioned system (PrecDirl-BiCG) has the same behavior
 as that of the standard PBiCG with ISRV1, %($\vc{r}^\SH_0=\Pinv\vc{r}_0$)
the right-preconditioned system (PrecDirr-BiCG) has the same behavior
 as that of the standard PBiCG with ISRV2, %($\vc{r}^\SH_0=\PO^\T\vc{r}_0$)
and
 the two-sided preconditioned system (PrecDirw-BiCG) has the same behavior
 as that of the standard PBiCG with ISRV3. %($\vc{r}^\SH_0=\PO^\T_R\Pinv_L\vc{r}_0$)

%%%%%%  section  %%%%%%%%%%%%%%%%%%%%%%%%%%%%%%%%%%%%%%%%%%%%%%%%%%%%%%%%%%%%%%%%%
\section{Conclusions}
\label{sec:conclusion}

In this paper,
we analyzed four different preconditioned BiCG (PBiCG) algorithms, 
from the viewpoint of their polynomial structure.
These PBiCG algorithms correspond to the four PCGS algorithms considered
in \cite{itoh2019a}.

We have shown the mechanism that determines the direction
of such a preconditioned system; that is, the
direction %of preconditioned system of PBiCG
is determined by $\alpha_k$ and $\beta_k$,
which are constructed by biorthogonal and biconjugate operations.
However,
the biorthogonal and biconjugate structures of the polynomials
of 
the four PBiCG methods are all the same.
Therefore,
we have identified that the final factor that can switch the direction
of such a preconditioned system is the
construction and setting
of the ISRV.
In particular,
we have shown that
the direction of the preconditioned system 
has never been fixed
without using the relation $\tvc{r}^\SH_0 = \tvc{r}_0$.
Furthermore,
we have shown an additional theorem
regarding the definition of the direction of a preconditioned system
for a BiCG method for solving linear equations.
In other words,
the construction and setting of the ISRV affect 
not only the shadow system,
but also the linear system
on the direction of the preconditioned system,
due to the inner product of $\alpha_k$ and $\beta_k$.

These properties of PBiCG methods are commonly discussed
in the literature of preconditioned bi-Lanczos-type algorithms,
for example,
preconditioned CGS (PCGS)
 and
preconditioned BiCG stabilized (PBiCGStab) algorithms \cite{vorst1992},
and so on.
Further,
\Thm~\ref{thm:dir_prec_by_isrv} presented by this paper
is able to be extended
to a variety of preconditioned bi-Lanczos-type algorithms.
On the other hand,
PCGS algorithms are congruent to the direction of the preconditioning conversion,
and this has already been analyzed \cite{itoh2019a};
though,
PBiCGStab algorithms are not congruent,
and they will be analyzed as an area of future work.

%%%%%%%%%%%%%%%%%%%%%%%%%%%%%%%%%%%%%%%%%%%%%%%%%%%%%%%%%%%%%%%%%%%%%%%%%%%%%%%%%%

%%%%%%%%%%%%%%%%%%%%%%%%%%%%%%%%%%%%%%%%%%%%%%%%%%%%%%%%%%%%%%%%%%%%%%%%%%%%%%%%%%
\appendix
\section{Stepwise analysis of the polynomials of the standard PBiCG}
\label{appsec:bahavior_pol_pbicg}

% in one instance,
Here we present detailed examples of
the polynomials of the standard PBiCG (\Alg~\ref{alg:pbicg})
when using 
ISRV3 ($\vc{r}^\SH_0 = \PO^\T_R\Pinv_L\vc{r}_0$, Example 1)
and
 the ISRV1 ($\vc{r}^\SH_0 = \Pinv\vc{r}_0$, Example 2); we perform a
stepwise analysis
by using the recurrence relations
 (\ref{eqn:prec_init_pol}) to (\ref{eqn:prec_prob_pol}) in \Sec~\ref{sec:pbicg_alg}.

We will use the following notation:
$\tA_w$ ( $= \Pinv_L A \Pinv_R$) means the two-sided preconditioning conversion,
$\tA_l$ ( $= \Pinv A$) means the left preconditioning conversion,
 and 
$\tA_r$ ( $= A\Pinv$) means the right preconditioning conversion.

The initial values of the polynomials in the preconditioned system
are as follows:
\begin{eqnarray}
\pP_0(\tA_w)=
\pP_0(\tA_l)=
\pP_0(\tA_r)=I, %\NON
\label{eqn:pol_prob_zero}
\\
\pR_0(\tA_w)=
\pR_0(\tA_l)=
\pR_0(\tA_r)=I. %\NON
\label{eqn:pol_res_zero}
\end{eqnarray}

% \newpage
\def\EQUIV{=}

\def\At{A^\T}
\def\PR{\PO_R}
\def\PL{\PO_L}
\def\Pt{\PO^\T}
\def\PRt{\PR^\T}
\def\PLt{\PL^\T}
\def\PRinv{\Pinv_R}
\def\PLinv{\Pinv_L}
\def\PRinvt{\Pinvt_R}
\def\PLinvt{\Pinvt_L}

\def\r0{\vc{r}_0}
\def\isrv3{(\PRt\PLinv)\r0}

\def\palpha{\alpha}
\def\pbeta{\beta}

%%%%%%%%%%%%%%%%%%%%%%%%%%%%%%%%%%%%%%%%%%%%%%%%%%%%%%%%%%%%%
%\stepcounter{alg}
\noindent
% \hspace{\ALGTITLEWIDTH}
{\bf Example 1. Details of standard PBiCG algorithm with ISRV3:}
\begin{indention}{0em}
%%%%%%%%%%%%%%%%%%%%%%%%%%%%%%%%%%%%%%%%%%%%%%%%%%%%%%%%%%%%%
\noindent
$\vc{x}_0$ is an initial guess,
$\SP \vc{r}_0= \vc{b}-A\vc{x}_0, \SP\SP$
set $\SP \beta_{-1}=0$,
\\
$\LA \tvc{r}^\SH_0, \tvc{r}_0 \RA
 =
 \LA \PRinvt\vc{r}^\SH_0, \PLinv\vc{r}_0 \RA
 \neq 0$,
e.g.,
$\vc{r}^\SH_0 = \PRt\PLinv\vc{r}_0$, %\SP \\
%
%
%%%%%%%%
\def\k{0}
\def\kp{1}
\def\km{{-1}}
\begin{eqnarray}
k = \k \SP : && \NON
\\
%%%%%% both %%%%%%
\vc{p}^+_\k     &=& \Pinv \vc{r}_\k = \Pinv_R\Pinv_L\r0, %+ \beta_\km\vc{p}_\km
\label{eqn:both_k=\k_p3}
\\
\vc{p}^\flat_\k &=& \Pinvt\vc{r}^\SH_\k %+ \beta_\km\vc{p}^\SH_\km 
               =  \Pinvt\isrv3  % \NON  % + \beta_\km\vc{p}^\SH_\km \NON
               =  \PLinvt\PLinv\r0 ,  % + \beta_\km\vc{p}^\SH_\km %, \NON
\label{eqn:both_k=\k_pS}
\\
\palpha_\k &=& \frac
            {\LA \vc{r}^\SH_\k,   \Pinv\vc{r}_\k \RA}
            {\LA \vc{p}^\flat_\k, A \vc{p}^+_\k      \RA} %\NON 
           =  \frac
            {\LA \isrv3           ,   \Pinv\r0 \RA}
            {\LA \PLinvt\PLinv\r0, A (\Pinv\r0) \RA}  % \NON
\\
          &=& \frac
            {\LA \PLinv\r0,                    \PLinv \r0 \RA}
            {\LA \PLinv\r0, (\PLinv A \PRinv ) \PLinv \r0 \RA} %, \NON
           \equiv \palpha^\rmW_\k , \NON
\label{eqn:both_k=\k_alp1}
\\
\vc{x}_\kp &=& \vc{x}_\k + \palpha^\rmW_\k\vc{p}^+_\k  %\NON
            =  \vc{x}_\k + \palpha^\rmW_\k\Pinv  \r0   %\NON
            =  \vc{x}_\k + \palpha^\rmW_\k\Pinv_R\Pinv_L  \r0,   %\NON
\label{eqn:both_k=\k_x}
\\
\vc{r}_\kp &=& \vc{r}_\k - \palpha^\rmW_\k A\vc{p}^+_\k \NON
\\
%%%%%% both %%%%%%
           &=& \PL (I - \palpha^\rmW_\k (\PLinv A \PRinv ) ) \PLinv\r0
\SP\SP\SP\SP\SP\SP\SP\SP\SP\SP\SP\SP\SP
           \EQUIV \UL{\UL{\PL \pR^\rmW_\kp(\tA_w) \PLinv\r0}}
%   \NON
% \\
\label{eqn:both_k=\k_r3}
\\
%%%%%% left %%%%%%
           &=&  \P(I - \palpha^\rmW_\k(\Pinv A) ) \Pinv\r0 %\NON
\SP\SP\SP\SP\SP\SP\SP\SP\SP\SP\SP\SP\SP\SP\SP\SP\SP\SP\SP\SP\SP\SP\SP
           \EQUIV \UL{\UL{\P \pR^\rmW_\kp(\tA_l) \Pinv\r0}}
%  \NON
% \\
\label{eqn:both_k=\k_r1}
\\
%%%%%% right %%%%%%
           &=& \vc{r}_\k - \palpha^\rmW_\k A\Pinv \r0  %\NON
            =      (I - \palpha^\rmW_\k (A\Pinv) ) \r0 %\NON
\SP\SP\SP
           \EQUIV \UL{\UL{\pR^\rmW_\kp(\tA_r) \r0}} , %\NON
\label{eqn:both_k=\k_r2}
\\
\vc{r}^\SH_\kp &=& \vc{r}^\SH_\k - \palpha^\rmW_\k \At \vc{p}^\flat_\k \NON
                =  \isrv3 - \palpha^\rmW_\k \At (\PLinvt  \PLinv\r0) \NON
\\
%%%%%% both %%%%%%
               &=& \PRt(I - \palpha^\rmW_\k (\PRinvt\At \PLinvt) )\PLinv\r0
\SP\SP\SP\SP\SP\SP\SP\SP\SP\SP
               \EQUIV \UL{\UL{\PRt \pR^\rmW_\kp(\tA^\T_w) \PLinv\r0}}
%  \NON
% \\
\label{eqn:both_k=\k_rS3}
\\
%%%%%% left %%%%%%
               &=&  (I - \palpha^\rmW_\k (\At \Pinvt) )\isrv3  \NON
\\
               &\EQUIV& \pR^\rmW_\kp(\At \Pinvt) \isrv3 %, \NON
\SP\SP\SP\SP\SP\SP\SP\SP\SP\SP\SP\SP
\SP\SP\SP\SP\SP\SP\SP\SP\SP\SP\SP\SP
               \EQUIV \UL{\UL{\PRt \pR^\rmW_\kp(\tA^\T_w) \PLinv\r0}}
%  \NON
% \\
\label{eqn:both_k=\k_rS1}
\\
%%%%%% right %%%%%%
               &=& \Pt(I - \alpha^\rmW_\k (\Pinvt \At) )\PLinvt\PLinv\r0  \NON %
\\
               &\EQUIV& \Pt\pR^\rmW_\kp(\Pinvt\At)\PLinvt\PLinv\r0 %, \NON
\SP\SP\SP\SP\SP\SP\SP\SP\SP\SP\SP\SP
\SP\SP\SP
\SP\SP\SP
               \EQUIV \UL{\UL{\PRt \pR^\rmW_\kp(\tA^\T_w) \PLinv\r0}} ,
%  \NON
% \\
\label{eqn:both_k=\k_rS2}
\\
\pbeta_\k &=& \frac
            {\LA \vc{r}^\SH_\kp, \Pinv\vc{r}_\kp \RA}
            {\LA \vc{r}^\SH_\k,  \Pinv\vc{r}_\k  \RA} %\NON
          =  \frac
            {\LA \PRt \pR^\rmW_\kp(\tA^\T_w) \PLinv\r0, \Pinv\PL \pR^\rmW_\kp(\tA_w) \PLinv\r0 \RA}
            {\LA \pR_\k(\tA^\T_w) (\PRt \PLinv\r0) ,  \Pinv \pR_\k(\tA_w) \r0  \RA} %\NON
\\
         &=& \frac
            {\LA \pR^\rmW_\kp(\tA^\T_w) \PLinv\r0 , \pR^\rmW_\kp(\tA_w) \PLinv\r0 \RA}
            {\LA \pR_\k(\tA^\T_w)  \PLinv\r0 , \pR_\k(\tA_w)  \PLinv\r0 \RA}
          \equiv \pbeta^\rmW_\k . \NON
\label{eqn:both_k=\k_bet1}
\end{eqnarray}

The double-underlined equations show the important polynomial structures.
% of note.
By way of contrast,
neither (\ref{eqn:both_k=0_p3}) nor (\ref{eqn:both_k=0_pS})
is double underlined, and
their polynomials are not displayed; this is
because they are the identity matrix,
as indicated in (\ref{eqn:pol_prob_zero}) and (\ref{eqn:pol_res_zero}).

In the above description,
we will focus on $\Pinv_L\r0$
in the final structure of each equation.
Because
$\Pinv_L\r0$ is the initial residual vector
of the two-sided preconditioned system,
details of its properties can be found in \Thm~\ref{thm:dir_prec_by_isrv}
and \Rem~{\it \ref{rem:dir_ksp_and_isrv}}
in section~\ref{sec:direction_preconditioning}.
However,
at steps (\ref{eqn:both_k=0_p3}) and (\ref{eqn:both_k=0_pS}),
the intrinsic structure of $\vc{p}^+_0$ and $\vc{p}^\flat_0$ does not
play a role in determining the direction of the preconditioned system,
% because neither vector contributes to $\alpha_0$ or $\beta_0$.
because neither vector has parameter $\alpha_0$ or $\beta_0$.

The direction of preconditioned system is thus fixed as the two-sided system
when $\alpha_0$ is calculated in  (\ref{eqn:both_k=0_alp1}).
The approximate solution vector $\vc{x}_1$ is calculated
for the two-sided system in (\ref{eqn:both_k=0_x}),
because (\ref{eqn:both_k=0_x}) has $\alpha^\rmW_0$.

The intrinsic structure of the residual vector $\vc{r}_1$ may be that of 
(\ref{eqn:both_k=0_r3}) to (\ref{eqn:both_k=0_r2}), that is,
two-sided, left, or right, respectively%
\footnote{
For the same reason,
$\vc{p}^+_0$ of (\ref{eqn:both_k=0_p3}) and $\vc{x}_1$ of (\ref{eqn:both_k=0_x})
may be
two-sided, left, or right.
}%
.
However,
the direction of the preconditioned system has been already fixed
in (\ref{eqn:both_k=0_alp1}), the operation on $\alpha_0$,
therefore,
the intrinsic structure of $\vc{r}_1$ is fixed as
$\vc{r}_1 = \PL (I - \palpha^\rmW_0 (\PLinv A \PRinv ) ) \PLinv\r0
          = \PL \pR^\rmW_1(\tA_w) \PLinv\r0$.
Furthermore,
this initial residual vector part is $\Pinv_L\r0$.

On the other hand, the
intrinsic structure of the residual vector $\vc{r}^\SH_1$ may be created by
(\ref{eqn:both_k=0_rS3}) to (\ref{eqn:both_k=0_rS2}),
but
these all reduce to the same structure,

\noindent
$\vc{r}^\SH_1 = \PRt(I - \palpha^\rmW_0 (\PRinvt\At \PLinvt) )\PLinv\r0
              = \PRt \pR^\rmW_1(\tA^\T_w) \PLinv\r0$.
The reason for this is that
the direction of the preconditioned system has been already fixed as $\alpha^\rmW_0$, the
same as for $\vc{r}_1$.
Furthermore,
the part of $\Pinv_L\r0$
and the shadow system with the transpose matrices may not be compatible%
\footnote{
For the same reason
as for $\vc{p}^\flat_0$ of (\ref{eqn:both_k=0_pS}),
the part of $\Pinv_L\r0$
and the shadow system with the transpose matrices may not be compatible.
}.

When $\beta_0$ operates in the denominator,
$\pR_\k(\tA^\T_w)$ does not fix the direction of the preconditioned system
because of (\ref{eqn:pol_res_zero}).

The subsequent iterated operations are as follows:

%
%%%%%%%%
\def\k{k}
\def\kp{{k+1}}
\def\km{{k-1}}
{\rm For } $\SP k = 1, 2, 3, \cdots, {\rm Do:}$
\begin{eqnarray}
\vc{p}^+_\k     &=& \Pinv \vc{r}_\k + \pbeta^\rmW_\km\vc{p}^+_\km %\NON
%%%%%% both %%%%%%
              \EQUIV \UL{\UL{\PRinv \pP^\rmW_\k(\tA_w)\PLinv\r0}} ,
\NON
\label{eqn:both_k=\k_p3}
\\
\vc{p}^\flat_\k &=& \Pinvt\vc{r}^\SH_\k + \pbeta^\rmW_\km\vc{p}^\flat_\km %\NON
\NON
\label{eqn:both_k=\k_pS}
              \EQUIV \UL{\UL{\PLinvt \pP^\rmW_\k(\tA^\T_w) \PLinv\r0}}, %\NON
\\
\palpha^\rmW_\k &=& \frac
            {\LA\vc{r}^\SH_\k,   \Pinv\vc{r}_\k \RA}
            {\LA\vc{p}^\flat_\k, A \vc{p}^+_\k      \RA} % \NON 
\NON
\label{eqn:both_k=\k_alp1}
           =  \frac
            {\LA \pR^\rmW_\k(\tA^\T_w)\PLinv\r0, \pR^\rmW_\k(\tA_w)\PLinv\r0 \RA}
            {\LA \pP^\rmW_\k(\tA^\T_w)\PLinv\r0, (\PLinv A\PRinv)\pP^\rmW_\k(\tA_w)\PLinv\r0 \RA}
 , %\NON
\\
\vc{x}_\kp &=& \vc{x}_\k + \palpha^\rmW_\k\vc{p}^+_\k  %\NON
           \EQUIV \vc{x}_\k + \UL{\UL{\palpha^\rmW_\k\PRinv \pP^\rmW_\k(\tA_w)\PLinv \r0}}
, \NON
\\
\vc{r}_\kp &=& \vc{r}_\k - \palpha^\rmW_\k A\vc{p}^+_\k %\NON
%%%%%% both %%%%%%
\NON
\label{eqn:both_k=\k_r3}
           \EQUIV \UL{\UL{\PL \pR^\rmW_\kp(\tA_w) \PLinv\r0}} , %\NON
\\
\vc{r}^\SH_\kp &=& \vc{r}^\SH_\k - \palpha^\rmW_\k \At \vc{p}^\flat_\k %\NON
\NON
\label{eqn:both_k=\k_rS}
              \EQUIV \UL{\UL{\PRt \pR^\rmW_\kp(\tA^\T_w) \PLinv\r0}} , %\NON
\\
\pbeta^\rmW_\k &=& \frac
            {\LA \vc{r}^\SH_\kp, \Pinv\vc{r}_\kp \RA}
            {\LA \vc{r}^\SH_\k,  \Pinv\vc{r}_\k  \RA} %\NON
          =  \frac
            {\LA \pR^\rmW_\kp(\tA^\T_w)\PLinv\r0 , \pR^\rmW_\kp(\tA_w) \PLinv\r0 \RA}
            {\LA \pR^\rmW_\k(\tA^\T_w) \PLinv\r0 , \pR^\rmW_\k(\tA_w)\PLinv\r0 \RA} , %\NON
\NON
\end{eqnarray}
\end{indention}
End Do
\\

Next,
we will also briefly describe the
polynomials of the standard PBiCG (\Alg~\ref{alg:pbicg})
with ISRV1 ($\vc{r}^\SH_0 = \Pinv\vc{r}_0$).
The initial values of the polynomials for the left-preconditioned system
are
% \begin{eqnarray}
$\pP^\rmL_0(\tA_l) = \pR^\rmL_0(\tA_l)=I$.
% \end{eqnarray}

Refer to Example 1 for a detailed description.

% \newpage
%%%%%%%%%%%%%%%%%%%%%%%%%%%%%%%%%%%%%%%%%%%%%%%%%%%%%%%%%%%%%
%\stepcounter{alg}
\noindent
% \hspace{\ALGTITLEWIDTH}
{\bf Example 2. Polynomial description of the standard PBiCG algorithm with ISRV1:}
\begin{indention}{2em}
%%%%%%%%%%%%%%%%%%%%%%%%%%%%%%%%%%%%%%%%%%%%%%%%%%%%%%%%%%%%%
\noindent
$\vc{x}_0$ is an initial guess,
$\SP \vc{r}_0= \vc{b}-A\vc{x}_0, \SP\SP$
set $\SP \beta^\rmL_{-1}=0$,
\\
$\LA \tvc{r}^\SH_0, \tvc{r}_0 \RA
 =
 \LA \PRinvt\vc{r}^\SH_0, \PLinv\vc{r}_0 \RA
 \neq 0$,
e.g.,
$\vc{r}^\SH_0 = \Pinv\vc{r}_0$, %\SP
\\
{\rm For } $\SP k = 0, 1, 2, 3, \cdots,$ {\rm Do:}
%
%
%%%%%%%%
\def\k{k}
\def\kp{{k+1}}
\def\km{{k-1}}
\begin{eqnarray}
\vc{p}^+_\k     &=& \Pinv \vc{r}_\k + \pbeta^\rmL_\km\vc{p}^+_\km %\NON
%%%%%% left %%%%%%
              \EQUIV \UL{\UL{\pP^\rmL_\k(\tA_l)\Pinv\r0}} ,  \NON
\label{eqn:left_k=\k_p3}
\\
\vc{p}^\flat_\k &=& \Pinvt\vc{r}^\SH_\k + \pbeta^\rmL_\km\vc{p}^\flat_\km %\NON
\NON
\label{eqn:left_k=\k_pS}
              \EQUIV \UL{\UL{\Pinvt \pP^\rmL_\k(\tA^\T_l) \Pinv\r0}}, %\NON
\\
\palpha^\rmL_\k &=& \frac
            {\LA\vc{r}^\SH_\k,   \Pinv\vc{r}_\k \RA}
            {\LA\vc{p}^\flat_\k, A \vc{p}^+_\k      \RA} % \NON 
\NON
\label{eqn:left_k=\k_alp1}
           =  \frac
            {\LA \pR^\rmL_\k(\tA^\T_l)\Pinv\r0, \pR^\rmL_\k(\tA_l)\Pinv\r0 \RA}
            {\LA \pP^\rmL_\k(\tA^\T_l)\Pinv\r0, (\Pinv A)\pP^\rmL_\k(\tA_l)\Pinv\r0 \RA}
 , %\NON
\\
\vc{x}_\kp &=& \vc{x}_\k + \palpha^\rmL_\k\vc{p}^+_\k  %\NON
           \EQUIV \vc{x}_\k + \UL{\UL{\palpha^\rmL_\k \pP^\rmL_\k(\tA_l)\Pinv \r0}}
, \NON
\\
\vc{r}_\kp &=& \vc{r}_\k - \palpha^\rmL_\k A\vc{p}^+_\k %\NON
%%%%%% left %%%%%%
% \NON
\label{eqn:left_k=\k_r3}
           \EQUIV \UL{\UL{\PO \pR^\rmL_\kp(\tA_l) \Pinv\r0}} , %\NON
\\
\vc{r}^\SH_\kp &=& \vc{r}^\SH_\k - \palpha^\rmL_\k \At \vc{p}^\flat_\k %\NON
\NON
\label{eqn:left_k=\k_rS}
              \EQUIV \UL{\UL{\pR^\rmL_\kp(\tA^\T_l) \Pinv\r0}} , %\NON
\\
\pbeta^\rmL_\k &=& \frac
            {\LA \vc{r}^\SH_\kp, \Pinv\vc{r}_\kp \RA}
            {\LA \vc{r}^\SH_\k,  \Pinv\vc{r}_\k  \RA} %\NON
          =  \frac
            {\LA \pR^\rmL_\kp(\tA^\T_l)\Pinv\r0 , \pR^\rmL_\kp(\tA_l) \Pinv\r0 \RA}
            {\LA \pR^\rmL_\k(\tA^\T_l) \Pinv\r0 , \pR^\rmL_\k(\tA_l)\Pinv\r0 \RA} , %\NON
\NON
\end{eqnarray}
End Do
\\
\end{indention}

For
the polynomial structures of
% (\ref{eqn:left_k=k_p3}) and 
(\ref{eqn:left_k=k_r3}),
refer to
 \Rem~{\it \ref{rem:std_pbicg_prec_pol}} in section~\ref{ssec:improved1_pbicg}. %,
%  \Rem~{\it \ref{rem:impr2_pbicg_prec_pol}} in section~\ref{ssec:improved2_pbicg},
% and
%  Case I-1) in the proof of \Thm~\ref{thm:dir_prec_by_isrv}.

%%%% Acknowledgments %%%%%%%%
% \section*{Acknowledgments}
% This work was partially supported by JSPS KAKENHI Grant Number JP25390145.

\if0

%%%%%%%%%%%%%%%%%%%%%%%%%%%%%%%%%%%%%%%%%%%%%%%%%%%%%%%%%%%%%%%%%%%%%%%%%%%%%%%%%%

\fi

\end{document}